\newcommand{\ndg}[1]{| \kern -.25mm \|{#1}| \kern -.25mm \|}
\newcommand{\ltwo}[2]{\|{#1}\|_{#2}}
\newcommand{\el}{ \kappa \in \mathcal{T} }
\newcommand{\ud}{\mathrm{d}}
\newcommand{\dint}{\text{\rm int}}
\newcommand{\ha}{\frac{1}{2}}
\newcommand{\qed}{ \vspace{-0.5cm} \hfill $\Box$ }
\newcommand{\mbf}[1]{\mbox{\boldmath$\rm{#1}$}}
\newcommand{\mean}[1]{ \{#1\} }
\newcommand{\jump}[1]{  [#1]  }
\newcommand{\jumptwo}[1]{[\![#1]\!]}
\newcommand{\lift}[1]{\mathcal{L}({#1})}
\newcommand{\h}[2]{\bigg(\kern -.4mm\frac{h_{#1}}{2}\kern -.4mm\bigg)^{\kern -1.1mm{#2}}}
\newcommand{\bn}{\mbox{\boldmath$\rm{n}$}}
\DeclareMathOperator{\ess}{ess}
\newtheorem{theorem}{Theorem}[section]
\newtheorem{definition}[theorem]{Definition}
\newtheorem{remark}[theorem]{Remark}
\newtheorem{lemma}[theorem]{Lemma}
\newenvironment{proof}[1][Proof]{\begin{trivlist}
\item[\hskip \labelsep {\bfseries #1}]}{\end{trivlist}\qed}
\newcommand{\R}{\mathbb{R}}
\def\eint{\Gamma_{\text{\rm int}}}
\def\eske{\Gamma}
\def\dom{\Omega}
\def\udg{U}
\def\udgell{\tilde{u}_h}
\def\timestep{\lambda}
\def\initpdesol{u_0}
\def\pdesol{u}
\def\pdesolell{\tilde{u}}
\def\ellrec{\omega}
\def\pdesource{f}
\def\pdesourcetimeapprox{\tilde{f}}
\def\pdesourceell{\phi}
\def\dualpdesolell{z}
\def\ellest{\mathcal{E}}
\def\parerr{\rho}
\def\ellerr{\epsilon}
\def\ellrecsource{g}
\def\testfuncLinfty{\tilde{v}}
\def\testfuncLtwo{\bar{v}}
\def\elem{\kappa}
\def\edge{e}
\def\theend{\hspace{.1cm} \text{.}}
\newcommand{\del}[1]{\Delta{#1}}
\newcommand{\dels}[1]{\Delta^{2}{#1}}
\newcommand{\bdel}[1]{\Delta_{h}{#1}}
\newcommand{\nab}[1]{\nabla{#1}}
\newcommand{\nabn}[1]{\nabla{#1} \cdot \vect{n}}
\newcommand{\vect}[1]{\mathbf{#1}}
\newcommand{\wavy}[1]{\mathcal{#1}}
\newcommand{\norm}[2]{||#1||_{#2}}
\newcommand{\snorm}[2]{|#1|_{#2}}
\newcommand{\enorm}[1]{|||#1|||}
\newcommand{\roots}[3]{{\displaystyle {\displaystyle (#1)}^{\frac{#2}{#3}}}}
\newcommand{\sfrac}[2]{ {\displaystyle \frac{{\displaystyle #1}}{{\displaystyle #2}}} }
\newcommand{\be}{\begin{equation}}
\newcommand{\ee}[1]{\label{#1} \end{equation}}
\newcommand{\bea}{\begin{eqnarray}}
\newcommand{\eea}{\end{eqnarray}}
\newcommand{\bale}[2]{\begin{equation} \left#1 \begin{array}{#2}}
\newcommand{\eale}[2]{\end{array} \right#1 \label{#2} \end{equation} }
\newcommand{\bal}[2]{\left#1 \begin{array}{#2}}
\newcommand{\eal}[1]{\end{array} \right#1  }
\newcommand{\ints}[2]{{\displaystyle \int_{#1}{#2} \hspace{.02in} {\rm d}s}}
\newcommand{\intx}[2]{{\displaystyle \int_{#1}{#2} \hspace{.02in} {\rm d}x}}
\newcommand{\femspace}{S^r}
\newcommand{\femspacen}[1]{S^{r,#1}}
\def\themesh{\wavy{T}}
\newcommand{\elsum}[1]{{\displaystyle \sum_{\elem \in \themesh}{#1}}}
\newcommand{\gap}{\hspace{.05in}}
\newcommand{\const}[1]{C_{#1}}
\def\constcont{C_{\text{\it{cont}}}}
\def\constcoer{C_{\text{\it{coer}}}}
\date{\today}
\title{Adaptive discontinuous Galerkin approximations\\ to fourth order parabolic problems}
\author{Emmanuil H.~Georgoulis$^*$ and
Juha M.~Virtanen\thanks{Department of Mathematics, University of Leicester,
University Road, Leicester LE1 7RH, United Kingdom. E-mail: {\tt Emmanuil.Georgoulis@le.ac.uk, jv77@le.ac.uk}}
}
\begin{document}
\maketitle

\begin{abstract}
An adaptive algorithm, based on residual type a posteriori indicators of errors measured in $L^\infty(L^2)$ and $L^2(L^2)$ norms, for a numerical scheme consisting of implicit Euler method in time and discontinuous Galerkin method in space for linear parabolic fourth order problems is presented. The a posteriori analysis is performed for convex domains in two and three space dimensions for local spatial polynomial degrees $r \ge 2$. The a posteriori estimates are then used within an adaptive algorithm, highlighting their relevance in practical computations, which results into substantial reduction of computational effort. 
\end{abstract}

\section{Introduction}
Fourth order parabolic equations and corresponding initial-boundary value problems appear in the modelling in areas as diverse as biology, phase-field modelling and image processing to name a few. In most cases of practical interest one has to resort to numerical methods for their solution, due to complex geometry and/or the presence of non-linearities.

During the last five decades, finite element methods (FEMs) have been widely used to numerically solve fourth order elliptic or parabolic problems; see, e.g., \cite{baker, ciarlet, brezzi.fortin:mixed,destuynder, hughes, brenner,moz.suli:nipg} and the references therein for earlier works. There are, generally speaking, three families of FEMs developed for fourth order problems: conforming, mixed and non-conforming. The classical conforming methods (see, e.g., \cite{ciarlet} and the references therein) require the construction of complicated elements with a number of degrees of freedom devoted to ensuring $C^1$-continuity across the element interfaces. This results into limitations in the applicability of conforming methods on general, possibly irregular, meshes \cite{stogner:06} and their non-trivial extensions to dimensions three (or higher). Mixed methods  (see, e.g., \cite{brezzi.fortin:mixed,destuynder} and the references therein), whereby the fourth order operator is first transformed into a system of second order operators are widely used in practice, but they require very careful treatment in the imposition of essential and natural boundary conditions. Non-conforming methods for fourth order problems were first presented by \cite{baker} and then further developed in \cite{hughes, brenner,moz.suli:nipg,MR2142199} and other works. The key idea in non-conforming methods is the use of penalties to ensure convergence into the natural energy space of the variational problem, despite finite element basis functions being either just continuous ($C^0$-interior penalty procedures; see, e.g., \cite{hughes, brenner}) or completely discontinuous (discontinuous Galerkin interior penalty procedures; see, e.g., \cite{baker,moz.suli:nipg,MR2142199,feng_kar}). 

Adaptive FEMs based on a posteriori error estimates has been an active field of research in recent years, especially for second order elliptic and parabolic problems. For the case of fourth order elliptic problems a posteriori error estimators and indicators have been developed, e.g., in \cite{MR1426319,verfurth,MR1902705,MR1836871,MR2291934,MR2373172,MR2670114, MR2752872,georgoulis_houston_virtanen}. A posteriori bounds and adaptive algorithms for parabolic fourth order problems are far less developed in the literature. For instance, the development of adaptive algorithms based on various types of a posteriori indicators for the Cahn-Hilliard fourth order parabolic problem can be found in \cite{larsson-mesforush,feng-wu:08,banas-nuernberg:09}. Error control for variational methods for fourth order parabolic equations has been predominantly focused to space-discrete mixed or conforming formulations. The recent work \cite{larsson-mesforush} deals with goal-oriented error estimation for the fully discrete Cahn-Hilliard problem. Therefore, the development of adaptive algorithms based on a posteriori estimators for fully discrete methods for fourth order parabolic problems is still largely an unexplored area.

Advances in a posteriori error analysis of fully discrete schemes with non-conforming spatial discretizations of second order parabolic problems have been recently presented \cite{ern-vohralik:10, georgoulis_lakkis_virtanen}. In \cite{georgoulis_lakkis_virtanen}, an adaptive algorithm based on the derived a posteriori estimates is also considered.
Local residual a posteriori error bounds for semi-discrete conforming and mixed spatial discretizations ffor the Cahn-Hilliard problem and the Hele-Shaw flow are presented in \cite{feng-wu:08}. Finally, a posteriori error estimates in an $L^2(H^2)$-type norm and adaptive algorithms for fully discrete schemes with discontinuous Galerkin methods for fourth order problems are proposed in \cite{thesis_juha}. The derivation of reliability bounds in \cite{thesis_juha} is based on the elliptic reconstruction framework of Makridakis and Nochetto \cite{MR2034895}; we also refer to \cite{lakkis-makridakis:06,georgoulis_lakkis_virtanen} for some relevant extensions.

This work is concerned with the derivation of a posteriori error estimates in weaker than $L^2(H^2)$-norms and their use within an adaptive algorithm for a class of discontinuous Galerkin interior penalty methods for a fully discrete approximation of the problem:
\begin{eqnarray} 
\pdesol_{t} +  \Delta^2 \pdesol &=& \pdesource \quad \;  \text{ in } \; \dom \times (0,T], \label{modelpde1}\\
\pdesol = \nabn{\pdesol} &= &0 \quad \; \, \text{ in } \; \partial\dom \times (0,T] \quad \text{ and } \label{modelpde2}\\
\pdesol &= &\initpdesol \quad \text{ in } \; \dom \times \{0\} \quad \text{} \label{modelpde3}
\end{eqnarray}
with $\dom \subset \mathbb{R}^d$, $d=2,3$ a convex polygonal domain with boundary $\partial\dom$.  More specifically, we derive a posteriori error estimators for the error measured in $L^\infty(L^2)$ and $L^2(L^2)$ norms for a numerical scheme consisting of discontinuous Galerkin method in space and simple backward-Euler time-stepping for the problem  (\ref{modelpde1}) - (\ref{modelpde3}). The a posteriori analysis is performed for convex domains (as is usual for these norms) in two and three space dimensions for local spatial polynomial degrees $r \ge 2$. To enable the optimality of the a posteriori estimators in the $L^\infty(L^2)$ and $L^2(L^2)$ norms, the elliptic reconstruction framework is employed. Moreover, the $L^2(L^2)$-norm analysis employs a special test function construction inspired from the a priori analysis of FEMs for wave problems in \cite{baker_wave}. Somewhat surprisingly, the use of this special testing, in conjunction with the elliptic reconstruction, results into the derivation of $L^2(L^2)$-norm a posteriori estimators via a standard energy argument. The efficiency of the a posteriori estimators is assessed numerically. The reliability bounds are used within two variants of a space-time adaptive algorithm. The adaptive algorithm is able to achieve the same error reduction with far fewer degrees of freedom compared to uniform meshes, thereby highlighting the relevance of the derived a posteriori estimates in practical computations. The simple model problem (\ref{modelpde1}) - (\ref{modelpde3}) appears to be sufficient in highlighting some of the challenges in the error estimation and adaptivity of finite element methods for more complex fourth order parabolic problems.  It appears that the derived a posteriori bounds and the respective adaptive algorithms can be modified in a straightforward fashion to include the original dG method of Baker \cite{baker} and $C^0$-interior penalty methods \cite{hughes, brenner}. 

The remaining of this work is organised as follows. In Section \ref{notaandprem}, notation is introduced and some standard results needed in the subsequent analysis are recalled. The discontinuous Galerkin (dG) method for the biharmonic problem, along with the derivation of posteriori error bounds for the dG approximation of the biharmonic problem in $L^2$-norm are derived in Section \ref{section:DGmethod}. The respective fully discrete scheme for the parabolic model problem (\ref{modelpde1}) - (\ref{modelpde3}) is given in Section \ref{section:nummethod}, while Section \ref{apost_parabolic} contains the derivation of residual type a posteriori estimates of errors in $L^\infty(L^2)$ and $L^2(L^2)$ norms for the fully discrete scheme.  The efficiency and reliability of the a posteriori estimators is tested on a range of uniform meshes in Section \ref{numerics}. The adaptive algorithm utilizing the a posteriori estimates in a series of numerical experiments are also presented in Section \ref{numerics}. Some concluding remarks regarding the results and possible extensions are given in Section \ref{finalremark}.

\section{Notation and preliminaries}\label{notaandprem}

The standard Hilbertian Lebesgue space is denoted by $L^2(\omega)$, for a domain $\omega\subset\mathbb{R}^d$, $(d=2,3)$,
with corresponding inner product $\langle\cdot,\cdot,\rangle_{\omega}$ and norm $\|\cdot\|_{\omega}$;
when $\omega=\Omega$, we shall drop the subscript writing $\langle\cdot,\cdot,\rangle$ and $\|\cdot\|$, respectively.
We also denote by $H^s(\omega)$, the standard Hilbertian Sobolev
space of index $s\ge 0$ of real-valued functions defined on
$\omega\subset\mathbb{R}^d$, along with the corresponding norm and seminorm
$\|\cdot\|_{s,\omega}$ and $|\cdot|_{s,\omega}$, respectively. For $1\le p\le +\infty$, we also define the spaces $L^p(0,T,H^s(\omega))$, consisting of all measurable functions $v: [0,T]\to H^s(\omega)$,
for which
\begin{equation}
  \begin{gathered}
    \|v\|_{L^p(0,T;H^s(\omega))}
    :=\Big(\int_0^T \|v(t)\|_{s,\omega}^p\ud t\Big)^{1/p}<+\infty,
    \quad \text{for}\quad 1\le p< +\infty, \\
    \|v\|_{L^{\infty}(0,T;X)}:={\ess \sup}_{0\le t\le
      T}\|v(t)\|_{s,\omega}<+\infty,\quad \text{for}\quad  p = +\infty.
  \end{gathered}
\end{equation}

Let $\themesh$ be a subdivision of $\dom$ into disjoint elements $\el$.
The subdivision $\themesh$ is assumed to be shape-regular (see, e.g.,
p.124 in \cite{ciarlet}) and is constructed via smooth mappings
$F_{\elem}:\hat{\elem}\to\elem$ with uniformly bounded Jacobian throughout the mesh family considered, where $\hat{\elem}$ is the reference element.  
The above mappings are assumed to be constructed so as to ensure $\bar{\dom}=\cup_{\el}\elem$ and that
the elemental edges are straight segments (i.e., lines or planes). Note that we also use the expression edge to mean side when $d=3$.

The broken Laplacian, $\bdel{u}$, is defined element-wise by
$(\bdel{u})|_{\elem}:=\del({u}|_{\elem})$ for all  $\elem \in \themesh$.

For a nonnegative integer $r$, we denote by $\mathcal{P}_r(\hat{\elem})$, the set of all polynomials of
total degree at most $r$, if $\hat{\elem}$ is the reference simplex, or of degree at most $r$ in each variable, if $\hat{\elem}$ is the reference hypercube.
We consider the finite element space
\begin{equation}\label{eq:FEM-spc}
  \femspace  :=\{v\in L^2(\dom):v|_{\elem}\circ F_{\elem}
    \in\mathcal{P}_{r}(\hat{\elem}),\,\el\}.
\end{equation}

By $\Gamma$ we denote the union of all $(d-1)$-dimensional element
edges associated with the subdivision $\themesh$, including the boundary. Further, we decompose $\Gamma$ into
two disjoint subsets $\Gamma=\partial\Omega\cup\Gamma_{\dint}$,
where $\Gamma_{\dint}:=\Gamma\backslash\partial\Omega$.

For two (generic) elements $\elem^+,\elem^-\in\themesh$ sharing an edge $e=\elem^+\cap\elem^-$, we define the outward normal unit vectors $\mbf{n}^+$ and $\mbf{n}^-$ on $e$ corresponding
to $\partial\elem^+$ and $\partial\elem^-$, respectively. For functions $v:\dom\to\mathbb{R}$ and
$\mbf{q}:\dom\to\mathbb{R}^d$, that may be discontinuous across $\Gamma$,
we define the following quantities.
For $v^+:=v|_{e\subset\partial\elem^+}$, $v^-:=v|_{e\subset\partial\elem^-}$,
$\mbf{q}^+:=\mbf{q}|_{e\subset\partial\elem^+}$, and $\mbf{q}^-:=\mbf{q}|_{e\subset\partial\elem^-}$, we set
\[ \mean{v}:=\ha(v^+ + v^-),\ \mean{\mbf{q}}:=\ha(\mbf{q}^+ + \mbf{q}^-),
\qquad \jumptwo{v}:=q^+\mbf{n}^++q^-\mbf{n}^-,\ \jump{\mbf{q}}:=\mbf{q}^+\cdot \mbf{n}^++\mbf{q}^-\cdot \mbf{n}^-;\]
if $e\in \partial\elem\cap\partial\Omega$, these definitions are modified to $\mean{v}:=v^+$, $\mean{\mbf{q}}:=\mbf{q}^+$,
 $\jumptwo{v}:=v^+\mbf{n}$, $\jump{\mbf{q}}:=\mbf{\mbf{q}}^+\cdot \mbf{n}$.
With the above definitions, it is easy to verify the identity
\begin{equation}\label{scalarformula}
\elsum{\ints{\partial \elem}{v\,\mbf{q} \cdot \vect{n} } } =
\ints{\Gamma}{ \jumptwo{v}\cdot\mean{\mbf{q}}}  + \ints{\eint}{ \mean{v}\jump{\mbf{q}}},
\end{equation}
with $\mbf{n}$ denoting the outward normal unit vector on $\partial\elem$, corresponding to $\elem$.

We define the element size $h_{\elem}:=(\mu_{d}(\elem))^{1/d}$, where $\mu_{d}$ is the $d$-dimensional Lebesgue measure; we collect the element sizes into the 
into the element-wise constant function ${\bf h}:\dom\to\mathbb{R}$, with
${\bf h}|_{\elem}=h_{\elem}$, $\el$ and $\mbf{h}=\mean{{\mbf h}}$ on $\Gamma$. Also, for two (generic) elements  $\elem^+$, $\elem^-$ sharing an edge $e:=\partial\elem^+\cap\partial\elem^-\subset\Gamma_{\dint}$, we
define $h_{\edge}:=\mu_{d-1}(\edge)$.

As we shall be dealing with mesh adaptive algorithms below, we assume that all sequences of meshes considered in this work are locally quasi-uniform, i.e.,
there exists constant $c\ge 1$,
independent of ${\bf h}$, such that, for any pair of
elements $\elem^+$ and $\elem^-$ in $\themesh$ which share an edge,
\begin{equation}\label{eq:bdd-var}
 c^{-1}\le h_{\elem^+}/h_{\elem^-}\le c.
\end{equation}

Finally we recall a series of some (standard) results used throughout this work; their proofs can be found, e.g., in \cite{ciarlet,dupont-scott:80,braess:01,brenner-scott:02,brenner-wang-zhao:02}. 
\begin{lemma}[approximation property] \label{lemma:approximationproperty}
Let $0 \le m \le r + 1$ and $\themesh$ be a subdivision of $\dom$, $\dom \subset \R^d$. Then there exists a constant $C_{\text{app}}$, independent of $h_{\kappa}$, such that for any $u \in H^m(\dom)$ and $\kappa \in \themesh$, there exists $p:C(\elem)\to\R$, with $p\circ F_{\elem} \in \mathcal{P}_r(\elem)$ and 
\begin{equation}\label{approximationproperty}
\snorm{u-p}{j,\kappa} \le C_{\text{app}} \ h^{m-j}_{\kappa} \ \snorm{u}{m,\kappa} \ , 0 \le j \le m .
\end{equation}
\end{lemma}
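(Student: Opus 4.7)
The plan is the classical three-step procedure: pull back to the reference element $\refel$, prove the approximation bound there by a Bramble--Hilbert / Deny--Lions argument, and then rescale back to the physical element $\elem$.

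First I would pull back by setting $\hat u := u \circ F_\elem \in H^m(\refel)$. On the fixed reference element the Deny--Lions lemma guarantees, since $m-1 \le r$, the existence of a polynomial $\hat p \in \mathcal{P}_r(\refel)$ (for instance an averaged Taylor polynomial, or the $L^2(\refel)$-projection onto $\mathcal{P}_r(\refel)$) such that
\begin{equation*}
\snorm{\hat u - \hat p}{j,\refel} \le \hat C \, \snorm{\hat u}{m,\refel}, \qquad 0 \le j \le m,
\end{equation*}
where $\hat C$ depends only on $\refel$, $m$, $r$, $j$. This is the core approximation-theoretic ingredient and is essentially independent of the geometry of $\elem$.

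Next I would transfer the estimate back to $\elem$ by defining $p := \hat p \circ F_\elem^{-1}$, so that $p \circ F_\elem = \hat p \in \mathcal{P}_r(\refel)$. The standard change-of-variables/scaling estimates, together with the hypothesis that the Jacobians of $F_\elem$ are uniformly bounded and the shape-regularity of $\themesh$, yield
\begin{equation*}
\snorm{u - p}{j,\elem} \le C \, h_\elem^{d/2 - j} \, \snorm{\hat u - \hat p}{j,\refel}, \qquad \snorm{\hat u}{m,\refel} \le C \, h_\elem^{m - d/2} \, \snorm{u}{m,\elem},
\end{equation*}
with $C$ independent of $h_\elem$. Composing these with the reference bound yields \refe{approximationproperty} with $C_{\text{app}}$ depending only on the shape-regularity parameter, the uniform bounds on $\{F_\elem\}$, and $m$, $r$, $j$.

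The main technical point, and the only nontrivial one, is the second step: the scaling is immediate for affine $F_\elem$ but requires some care here because $F_\elem$ is only assumed smooth. Concretely, one must control the higher-order derivatives appearing in the Fa\`a di Bruno expansion of $D^j(v \circ F_\elem)$ for $1 \le j \le m$; the uniform bounds on $\|D^k F_\elem\|/h_\elem^k$ ($1 \le k \le m$) that follow from the assumed regularity of the family of element mappings and the uniform Jacobian bound ensure that the affine scaling rates $h_\elem^{d/2-j}$ are preserved up to a constant independent of $h_\elem$, and the estimate follows.
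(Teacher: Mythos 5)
Your argument is correct: the paper itself states this lemma without proof, citing it as a standard result from \cite{ciarlet,dupont-scott:80,braess:01,brenner-scott:02}, and your pull-back/Deny--Lions (Bramble--Hilbert)/rescaling argument is exactly the standard proof given in those references, with the exponents $h_\elem^{d/2-j}$ and $h_\elem^{m-d/2}$ combining to the stated rate $h_\elem^{m-j}$. Your closing remark correctly identifies the only point needing care under the paper's assumptions (smooth, non-affine $F_\elem$ with uniformly bounded Jacobians and shape regularity), namely that the chain-rule terms in $D^j(v\circ F_\elem)$ scale as in the affine case, so no gap remains.
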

\begin{lemma}[inverse estimate] \label{lemma:inverseestimate}
There exists a constant $C_{\text{inv}}$, independent of $h_{\elem}$, such that 
\begin{equation}\label{inverseestimate}
\snorm{p}{j,\kappa} \le C_{\text{inv}} \ h^{i-j}_{\kappa} \ \snorm{p}{i,\kappa} \ , 0 \le i \le j \le 2,
\end{equation}
for all $p:C(\elem)\to\R$, with $p\circ F_{\elem} \in \mathcal{P}_r(\elem)$.
\end{lemma}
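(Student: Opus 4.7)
The plan is a textbook scaling argument: pull back to the reference element, establish the estimate there by finite-dimensional equivalence of seminorms, and then push forward.

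First, set $\hat p := p \circ F_\kappa \in \mathcal{P}_r(\hat\kappa)$. The chain rule, the change of variables formula, and the uniform bounds on $D F_\kappa$ and $(D F_\kappa)^{-1}$ that are built into the definition of the mesh (shape-regularity together with the assumed uniform control on the Jacobians of the element maps) yield, for each integer $0 \le k \le 2$,
\[
\snorm{p}{k,\kappa} \le C\, h_\kappa^{d/2-k}\, \snorm{\hat p}{k,\hat\kappa},
\qquad
\snorm{\hat p}{k,\hat\kappa} \le C\, h_\kappa^{k-d/2}\, \snorm{p}{k,\kappa},
\]
with $C$ independent of $\kappa$.

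Next, on the reference element I claim the $\kappa$-free bound
\[
\snorm{\hat p}{j,\hat\kappa} \le \hat C_{r,i,j}\, \snorm{\hat p}{i,\hat\kappa}
\qquad \forall\, \hat p \in \mathcal{P}_r(\hat\kappa).
\]
The key observation is that $\ker\snorm{\cdot}{i,\hat\kappa} = \mathcal{P}_{i-1}(\hat\kappa)$ is contained in $\ker\snorm{\cdot}{j,\hat\kappa} = \mathcal{P}_{j-1}(\hat\kappa)$ whenever $i \le j$. Hence both seminorms descend to the finite-dimensional quotient space $\mathcal{P}_r(\hat\kappa)/\mathcal{P}_{i-1}(\hat\kappa)$, on which $\snorm{\cdot}{i,\hat\kappa}$ is a genuine norm. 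Compactness of the unit sphere in this quotient, together with the continuity of $\snorm{\cdot}{j,\hat\kappa}$, delivers the constant $\hat C_{r,i,j}$.

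Chaining the three bounds produces $\snorm{p}{j,\kappa} \le C_{\text{inv}}\, h_\kappa^{i-j}\, \snorm{p}{i,\kappa}$ with $C_{\text{inv}} = \hat C_{r,i,j}\, C^2$ independent of $h_\kappa$. The only delicate point is the scaling step when $F_\kappa$ is not affine: the chain rule then also brings in second derivatives of $F_\kappa$ and $F_\kappa^{-1}$, and it is precisely the hypothesis of uniformly bounded Jacobians combined with the restriction $j \le 2$ that keeps these terms under control. For affine $F_\kappa$ (e.g., simplicial meshes with straight edges), this step is immediate.
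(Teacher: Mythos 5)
Your proof is correct and is essentially the standard argument: the paper does not prove this lemma at all, but simply lists it among ``standard results'' with references to Ciarlet, Brenner--Scott, etc., and the scaling-plus-quotient-space (equivalence of seminorms on $\mathcal{P}_r(\hat\kappa)/\mathcal{P}_{i-1}(\hat\kappa)$) argument you give is precisely the proof found in those references. Your closing caveat about non-affine $F_\kappa$ is the right thing to flag, since a bound on the Jacobian alone does not control second derivatives of the map, but under the paper's mesh assumptions this is the usual benign technicality and does not affect the result.
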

\begin{lemma}[trace inequality] \label{lemma:traceinequality}
 For every $u \in H^1(\kappa)$, with $\elem \in\themesh$, there exists a constant $C_{\text{tr}} > 0$ independent of $h_{\elem}$ such that
\begin{equation} \label{traceinequality}
\norm{u}{0,\partial \elem}^2 \le C_{\text{tr}} ( h^{-1}_{\elem} \norm{u}{0,\kappa}^2 + h_{\elem} \snorm{u}{1,\kappa}^2 ). 
\end{equation}
\end{lemma}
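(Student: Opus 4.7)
The plan is to prove the trace inequality by the classical scaling argument: reduce the inequality on $\kappa$ to a fixed inequality on the reference element $\hat{\kappa}$, then transfer the scaling through the mapping $F_\kappa:\hat{\kappa}\to\kappa$. Because the mesh family is shape-regular with uniformly bounded Jacobians, the constants that appear in scaling are independent of $h_\kappa$.

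First I would establish the inequality on the reference element. For $\hat{u}\in H^1(\hat{\kappa})$, the continuous trace embedding $H^1(\hat{\kappa})\hookrightarrow L^2(\partial\hat{\kappa})$ yields a constant $\hat{C}$, depending only on $\hat{\kappa}$, such that
\begin{equation*}
\|\hat{u}\|_{0,\partial\hat{\kappa}}^2 \le \hat{C}\bigl(\|\hat{u}\|_{0,\hat{\kappa}}^2 + |\hat{u}|_{1,\hat{\kappa}}^2\bigr).
\end{equation*}
This can be proved, for example, via a divergence-theorem identity $\int_{\partial\hat{\kappa}} \hat{u}^2\,(\vect{x}\cdot\vect{n})\,\mathrm{d}s = \int_{\hat{\kappa}}\nabla\cdot(\vect{x}\,\hat{u}^2)\,\mathrm{d}x$ combined with a Cauchy--Schwarz/Young estimate, after choosing a vector field whose normal component is bounded below on $\partial\hat{\kappa}$; density of smooth functions then extends the bound to all of $H^1(\hat{\kappa})$.

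Next I would transfer the inequality to a generic element $\kappa$ via $\hat{u}:=u\circ F_\kappa$. Shape-regularity combined with the bounded-Jacobian assumption gives the standard scalings
\begin{equation*}
\|u\|_{0,\kappa}^2 \simeq h_\kappa^{d}\,\|\hat{u}\|_{0,\hat{\kappa}}^2, \qquad |u|_{1,\kappa}^2 \simeq h_\kappa^{d-2}\,|\hat{u}|_{1,\hat{\kappa}}^2, \qquad \|u\|_{0,\partial\kappa}^2 \simeq h_\kappa^{d-1}\,\|\hat{u}\|_{0,\partial\hat{\kappa}}^2,
\end{equation*}
with constants that depend only on the shape-regularity of $\themesh$. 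Inserting these into the reference inequality and multiplying through by $h_\kappa^{d-1}$ produces precisely
\begin{equation*}
\|u\|_{0,\partial\kappa}^2 \le C_{\text{tr}}\bigl(h_\kappa^{-1}\|u\|_{0,\kappa}^2 + h_\kappa\,|u|_{1,\kappa}^2\bigr),
\end{equation*}
with $C_{\text{tr}}$ depending on $\hat{C}$ and the shape-regularity constant only.

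The only delicate point is ensuring that the constants in the scaling relations really are $h_\kappa$-independent; this is where shape-regularity (uniform bounds on $\|DF_\kappa\|$, $\|DF_\kappa^{-1}\|$, and $|\det DF_\kappa|$ in terms of $h_\kappa$) and the fact that elemental edges are straight are essential. Once these scaling bounds are in hand, the rest is bookkeeping of powers of $h_\kappa$.
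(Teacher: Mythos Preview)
Your scaling argument via the reference element is correct and is the standard proof of this trace inequality. The paper does not actually prove this lemma: it is stated as one of several ``(standard) results'' with proofs deferred to references such as \cite{ciarlet,braess:01,brenner-scott:02}, and the argument you give is precisely the one found in those sources.
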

\begin{lemma}[Poincar\'e-Friedrichs inequality \cite{brenner-wang-zhao:02}] \label{lemma:pfinequality}
There exists a constant $C_{\text{pf}}$, independent of $h_{\kappa}$, such that for any $u\in L^2(\dom)$, with $u|_{\elem} \in H^2(\elem)$ for all $\elem\in\themesh$, we have
\begin{equation} \label{pfinequality}
\norm{u}{0,\dom}^2 + \snorm{u}{1,\dom}^2 \le C_{\text{pf}} \left( \snorm{u}{2,\dom}^2 +\norm{\mbf{h}^{-3/2}\jumptwo{u}}{0,\Gamma}^2 + \norm{\mbf{h}^{-1/2}\jump{\nabla u}}{0,\Gamma}^2  \right).
\end{equation}
\end{lemma}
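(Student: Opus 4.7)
The plan is to reduce to the classical (conforming) Poincar\'e-Friedrichs inequality on $H^2_0(\Omega)$ by constructing a conforming companion $Eu \in H^2_0(\Omega)$ to the piecewise $H^2$ function $u$ whose distance $u-Eu$ is controlled by the two jump norms on the right-hand side. This is precisely the strategy used in the cited work \cite{brenner-wang-zhao:02}.

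The central object is an enriching operator $E$ built by macroelement (HCT or Argyris) nodal averaging. Because $u$ is only piecewise $H^2$ rather than piecewise polynomial, the construction proceeds in two stages: first, on each $\kappa \in \themesh$ replace $u$ by its local polynomial approximation $\Pi u$ from Lemma \ref{lemma:approximationproperty}, so that the macroelement nodal degrees of freedom (function values, gradients, and, for Argyris, second derivatives at vertices) are unambiguously defined and enjoy the usual polynomial inverse estimates; second, define $Eu$ by averaging these nodal data over the patches around each vertex and edge, and zeroing out all boundary nodal data so that $Eu \in H^2_0(\Omega)$ by construction.

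The heart of the argument is then the approximation estimate
\begin{equation*}
\sum_{\kappa \in \themesh} \left( h_\kappa^{-4}\norm{u-Eu}{0,\kappa}^2 + h_\kappa^{-2}\snorm{u-Eu}{1,\kappa}^2 + \snorm{u-Eu}{2,\kappa}^2 \right) \le C \left(\norm{\mbf{h}^{-3/2}\jumptwo{u}}{0,\Gamma}^2 + \norm{\mbf{h}^{-1/2}\jump{\nabla u}}{0,\Gamma}^2\right),
\end{equation*}
whose proof is element-by-element: at each mesh entity the discrepancy between a local value of $u$ (or $\nabla u$) and its averaged counterpart is a linear combination of jumps of $u$ or $\nabla u$, and a combination of the inverse and trace inequalities (Lemmas \ref{lemma:inverseestimate} and \ref{lemma:traceinequality}) converts these pointwise/edge quantities into the scaled $L^2(\Gamma)$ norms on the right, with the correct weights $\mbf{h}^{-3/2}$ and $\mbf{h}^{-1/2}$; boundary entities are handled identically, using that $\jumptwo{u}$ and $\jump{\nabla u}$ already encode the Dirichlet data of the sought conforming function.

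The conclusion is then immediate. The classical Poincar\'e-Friedrichs inequality on $H^2_0(\Omega)$ gives $\norm{Eu}{0,\Omega}^2 + \snorm{Eu}{1,\Omega}^2 \le C_0 \snorm{Eu}{2,\Omega}^2$, and expanding $\snorm{Eu}{2,\Omega}^2 \le 2\snorm{u}{2,\Omega}^2 + 2 \sum_\kappa \snorm{u-Eu}{2,\kappa}^2$ and applying the triangle inequality together with the displayed approximation bound (absorbing the uniformly bounded $h_\kappa$ into the final constant $C_{\text{pf}}$) yields the claim. The main obstacle will be the construction and analysis of $E$, in particular the careful treatment of non-polynomial piecewise $H^2$ inputs through the preliminary local polynomial approximation and the reconstruction of the precise $h$-weighted jump norms appearing in the right-hand side of (\ref{pfinequality}); once these are in place the rest is a short triangle-inequality argument.
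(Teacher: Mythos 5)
A preliminary remark on the comparison: the paper does not prove this lemma at all — it is simply recalled, with a citation, from \cite{brenner-wang-zhao:02} — so there is no internal proof to measure your argument against. Your overall strategy (elementwise polynomial approximation, then nodal averaging into a $C^1$ macroelement space to get a conforming companion $Eu\in H^2_0(\dom)$, then the classical Poincar\'e--Friedrichs inequality for $Eu$ plus a triangle inequality) is the standard route and is essentially the one followed in the cited reference, so the plan itself is sound, including your correct observation that gradient nodal values are not defined for a raw piecewise $H^2$ function and must be read off a local polynomial approximation.

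However, the central displayed estimate is false as stated. You bound the weighted sum of $\norm{u-Eu}{0,\kappa}^2$, $\snorm{u-Eu}{1,\kappa}^2$ and $\snorm{u-Eu}{2,\kappa}^2$ by the two jump norms alone, but $u-Eu$ contains the local approximation error $u-\Pi u$, which is controlled by the broken $H^2$ seminorm (Lemma \ref{lemma:approximationproperty}) and not by jumps: take any smooth $u\in H^2_0(\dom)$ that is not piecewise polynomial — both jump terms on your right-hand side vanish, while the left-hand side is strictly positive because $Eu$ is a (macroelement) finite element function and cannot equal $u$. The correct intermediate estimate carries an additional $C\,\snorm{u}{2,\dom}^2$ (broken seminorm) on the right, coming both from $u-\Pi u$ elementwise and from the fact that the jumps of $\Pi u$ differ from those of $u$ by trace-plus-approximation terms. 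The repair is harmless for the final conclusion, since $\snorm{u}{2,\dom}^2$ already appears on the right of (\ref{pfinequality}), but as written the step you call the heart of the argument does not hold. A smaller omission: on boundary edges $\jump{\nabla u}$ encodes only the normal derivative, so after zeroing the boundary nodal data you must also control tangential derivative data through $\jumptwo{u}$ differentiated along the edge, via an edge inverse estimate; this is compatible with the $\mbf{h}^{-3/2}$ weight, but it is not "identical" to the interior case and deserves an explicit sentence.
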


\section{Discontinuous Galerkin method for the biharmonic problem} \label{section:DGmethod}

We consider the biharmonic equation
\begin{equation}\label{pde}
  \Delta^2 \pdesolell= \pdesourceell \quad\text{in } \dom,
\end{equation}
with homogeneous essential boundary conditions
\begin{equation}\label{modelbc}
\pdesolell=0\quad,\qquad
\nabla \pdesolell \cdot\bn=0\quad\text{on }\partial\dom ,
\end{equation}
where $\bn$ denotes the unit outward normal vector to $\partial\dom$ and $\pdesourceell \in L^2(\dom)$. Then the regularity of the problem implies that $\pdesolell \in H^4(\dom) \cap H^2_0(\dom)$ \cite{grisvard:elliptic}.

Upon defining the lifting operator $\mathcal{L}:\mathcal{S}:=\femspace+H_0^2(\dom)\to \femspace$  by
\be
 \int_{\dom} \mathcal{L}(\nu) \psi \,\ud x =  \int_{\Gamma}
                                  \Big(\jumptwo{\nu}\cdot\mean{\nabla \psi} -  \mean{\psi}\jump{\nabla \nu}\Big)\,\ud s\quad\forall \psi\in \femspace,
\ee{lifting}
 the \emph{(symmetric) interior penalty discontinuous Galerkin (dG) method} for  (\ref{pde}), (\ref{modelbc})  is given by:
\be
  \text{find}\ \udgell \in \femspace\ \text{such that}\quad B(\udgell,v_h)=l(v_h)\quad\forall v_h\in \femspace,
\ee{dgfem}
where the bilinear form $B:\mathcal{S}\times\mathcal{S}\to\mathbb{R}$ and the linear form $l:\mathcal{S}\to\mathbb{R}$ are given by
\begin{equation}\label{bilinear}
\begin{aligned}
B(w,v)   :=&   \int_{\dom} \Big(\Delta_h w \Delta_h v
                                       + \lift{w}\Delta_h v+\Delta_h w\lift{v}\Big)\,\ud x+ B_p(w,v)
\end{aligned}
\end{equation}
with
\[
B_p(w,v):=\int_{\Gamma}\Big( \sigma\jumptwo{w}\cdot\jumptwo{v}
                                         + \xi\jump{\nabla w}\jump{\nabla v} \Big)\,\ud s,
\]
and
\be
l(v): = \int_{\dom} \pdesourceell v \,\ud x,
\ee{linear}
respectively, for $w,v\in\mathcal{S}$. The piecewise constant discontinuity penalization parameters $\sigma,\xi: \eske \rightarrow \R$ are given by
\be
 \sigma|_{\edge}  =  \sigma_0 (\mbf{h}|_{\edge})^{-3}, \quad \xi|_{\edge}  =  \xi_0 (\mbf{h}|_{\edge})^{-1},
\ee{penaltyparametersdef}
respectively, where $\sigma_0 > 0$ and $\xi_0 > 0$. To guarantee the stability of the IPDG method defined
in (\ref{dgfem}), $\sigma_0$ and $\xi_0$ must be selected sufficiently large.

Note that this formulation is inconsistent for trial and test functions belonging to the solution space $\mathcal{S}$.
However, when $w,v\in\femspace$, in view of (\ref{lifting}), (\ref{bilinear}) gives
\begin{equation}\label{bilinear_original}
\begin{aligned}
B(w,v)   =&   \int_{\dom} \Delta_h w \Delta_h v\,\ud x+
\int_{\Gamma}\Big( \mean{\nabla\Delta w}\cdot\jumptwo{v}+\mean{\nabla\Delta v}\cdot\jumptwo{w}\\
                                       &-\mean{\Delta w}\jump{\nabla v}-\mean{\Delta v}\jump{\nabla w}
                                        +\sigma\jumptwo{w}\cdot\jumptwo{v}
                                         + \xi\jump{\nabla w}\jump{\nabla v} \Big)\,\ud s;
\end{aligned}
\end{equation}
therefore, (\ref{dgfem}) coincides with the symmetric version interior penalty method presented in \cite{suli.moz:sipg}.
For the bilinear form $B(\cdot,\cdot)$ in (\ref{bilinear}) we have the continuity and coercivity with respect to the energy norm on $\mathcal{S}$ defined by
\bale{.}{rcl}
\enorm{w}&=&\roots{ \norm{\Delta_h w}{\Omega}^2 + \norm{\sqrt{\sigma} \jumptwo{w}}{\eske}^2 + \norm{\sqrt{\xi}\jump{\nab{w}}}{\eske}^2 }{1}{2}.
\eale{.}{energynorm}
\begin{lemma}[\cite{georgoulis_houston}] \label{contcoerlemma}
For sufficiently large $\sigma_0>0$ and $\xi_0 > 0$ there exist positive constants $\constcont$ and $\constcoer$, depending only on the mesh parameters such that
\be
|B(u,v)| \leq \constcont \enorm{u} \gap \enorm{v} \gap \forall u,v \in \mathcal{S} \quad \text{and}
\ee{continuitythm}
\be
B(u,u) \geq \constcoer \enorm{u}^2 \gap \forall u \in \mathcal{S} \theend
\ee{coercivitythm}
\end{lemma}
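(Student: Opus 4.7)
The plan is to reduce both claims to a single auxiliary bound on the lifting operator, namely
\[
\norm{\lift{\nu}}{\dom} \;\le\; C\bigl(\sigma_0^{-1/2}\norm{\sqrt{\sigma}\jumptwo{\nu}}{\Gamma} + \xi_0^{-1/2}\norm{\sqrt{\xi}\jump{\nabla\nu}}{\Gamma}\bigr)\qquad \forall\,\nu\in\mathcal{S},
\]
with $C$ depending only on shape-regularity and on the constants from Lemmas \ref{lemma:traceinequality} and \ref{lemma:inverseestimate}. Once this is available, continuity follows by elementary Cauchy--Schwarz and coercivity by Young's inequality together with a largeness assumption on $\sigma_0,\xi_0$. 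So the genuine work is the lifting estimate.

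To prove the auxiliary bound, I would test the defining identity \eqref{lifting} against $\psi=\lift{\nu}\in\femspace$, obtaining
\[
\norm{\lift{\nu}}{\dom}^2 \;=\; \int_\Gamma \jumptwo{\nu}\cdot\mean{\nabla\lift{\nu}}\,\ud s \;-\; \int_\Gamma \mean{\lift{\nu}}\jump{\nabla\nu}\,\ud s,
\]
and then apply Cauchy--Schwarz on $\Gamma$ with the natural weights $\sigma^{\pm 1/2}$ and $\xi^{\pm 1/2}$, peeling off $\sqrt{\sigma}\jumptwo{\nu}$ and $\sqrt{\xi}\jump{\nabla\nu}$. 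The remaining boundary norms involve only the discrete function $\lift{\nu}\in\femspace$, so Lemma \ref{lemma:traceinequality} combined with Lemma \ref{lemma:inverseestimate} gives elementwise
\[
h_\elem^{3}\norm{\nabla\lift{\nu}}{0,\partial\elem}^2 + h_\elem\norm{\lift{\nu}}{0,\partial\elem}^2 \;\le\; C\norm{\lift{\nu}}{0,\elem}^2,
\]
so that $\norm{\sigma^{-1/2}\mean{\nabla\lift{\nu}}}{\Gamma}\le C\sigma_0^{-1/2}\norm{\lift{\nu}}{\dom}$ and $\norm{\xi^{-1/2}\mean{\lift{\nu}}}{\Gamma}\le C\xi_0^{-1/2}\norm{\lift{\nu}}{\dom}$. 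Dividing through by $\norm{\lift{\nu}}{\dom}$ yields the claim; note that it is vacuous on $H_0^2(\dom)$, consistent with $\lift{\cdot}$ vanishing there.

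Continuity then drops out: Cauchy--Schwarz applied termwise in \eqref{bilinear} bounds $\norm{\bdel w}{\dom}$, the jump norms in $B_p(w,v)$, and (via the lifting bound) the two cross terms $\int_\dom\lift{w}\bdel v\,\ud x$ and $\int_\dom\bdel w\,\lift{v}\,\ud x$, each by a piece of the energy norm, giving $|B(w,v)|\le\constcont\enorm{w}\enorm{v}$ with $\constcont$ depending on $\sigma_0,\xi_0$ and $C$. For coercivity I would write
\[
B(u,u) \;=\; \norm{\bdel u}{\dom}^2 + 2\!\int_\dom \lift{u}\,\bdel u\,\ud x + B_p(u,u),
\]
apply Young's inequality with parameter $\eps\in(0,1)$ to the cross term to get $\eps\norm{\bdel u}{\dom}^2+\eps^{-1}\norm{\lift{u}}{\dom}^2$, and absorb the second summand into $B_p(u,u)$ using the auxiliary bound. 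Choosing $\sigma_0,\xi_0$ large relative to $\eps^{-1}C^2$ leaves strictly positive coefficients on $\norm{\bdel u}{\dom}^2$ and on the two jump terms, yielding $B(u,u)\ge\constcoer\enorm{u}^2$.

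The main obstacle throughout is precisely the lifting estimate: one must track the correct $\mbf{h}$-weights (cubic in $\mbf{h}$ for $\sigma$, linear for $\xi$, matching the biharmonic scaling) and keep the constant's dependence on $\sigma_0,\xi_0$ fully explicit, since the absorption in the coercivity step is the only place where the choice of penalty parameters actually has to be quantified.
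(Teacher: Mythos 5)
Your proposal is correct: the lifting estimate obtained by testing (\ref{lifting}) with $\psi=\lift{\nu}$ and using the trace and inverse inequalities with the $\mbf{h}^{3}$ and $\mbf{h}$ scalings, followed by Cauchy--Schwarz for (\ref{continuitythm}) and Young's inequality with absorption into $B_p(u,u)$ for sufficiently large $\sigma_0,\xi_0$ for (\ref{coercivitythm}), is exactly the standard argument. The paper gives no proof of this lemma, quoting it from \cite{georgoulis_houston}, and your route coincides with the argument used there (adapted to the lifted form of $B$ on $\mathcal{S}$), so there is nothing to add.
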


An a posteriori bound for the energy norm error of the dG method (\ref{dgfem}) for (\ref{pde}) -- (\ref{modelbc}) has been considered in \cite{georgoulis_houston_virtanen}. Now, we shall present an a posteriori bound for the $L^2$-norm error (cf. \cite{riviere-wheeler:03} for a corresponding result for the second order problem).

\begin{theorem}[$L^2$-a posteriori bounds for the elliptic problem]  \label{theorem:ltwo_apost_biharmonic}
Let $\pdesolell \in H^4(\dom)  \cap H^2_0(\dom)$ be the solution of (\ref{pde})--(\ref{modelbc}), $\udgell \in \femspace$
be dG approximation (\ref{dgfem}) associated with the mesh $\themesh$.
Then, there exists a positive constant $\const{(\ref{theorem:ltwo_apost_biharmonic})}$, independent of $\themesh$, $\mbf{h}$, $\pdesolell$ and $\udgell$, such that
\begin{equation}\label{ltwo_apost_bound_bound}
\ltwo{\pdesolell-\udgell}{} \le   \ellest \left(\themesh, \udgell, \pdesourceell \right),
\end{equation}
where 
\begin{equation}\label{ltwo_apost_estimator}
\begin{aligned}
\ellest \left(\themesh, \udgell, \pdesourceell \right) & := \const{(\ref{theorem:ltwo_apost_biharmonic})}\Big( \ltwo{\mbf{h}^{4 - \lambda/2}(\pdesourceell - \Delta_h^2 \udgell)}{}^2
+ \ltwo{\mbf{h}^{(7 - \lambda)/2}\jump{ \nabla \Delta \udgell}}{\Gamma_{\dint}}^2 +  \ltwo{\mbf{h}^{(5 - \lambda)/2}\jumptwo{ \Delta \udgell}}{\Gamma_{\dint}}^2 \\
& \phantom{ := } + \sum_{\edge \in \Gamma} \left( h_{\edge}^{3 - \lambda} \left( 1 + \xi_0^2 \right) {\ltwo{\jump{ \nabla \udgell}}{}}^2 + h_{\edge}^{1 - \lambda} \left( 1 + \sigma_0^2 \right) {\ltwo{\jumptwo{ \udgell}}{}}^2 \right)
\end{aligned}
\end{equation}
and $\lambda := 2 \, (2 - \min \{2, r - 1\})$.
\end{theorem}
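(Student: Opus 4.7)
My plan is a standard Aubin--Nitsche duality argument. First, I would introduce the dual problem: let $\phi\in H^4(\dom)\cap H^2_0(\dom)$ solve $\Delta^2\phi = \pdesolell - \udgell$ in $\dom$ with $\phi = \nabla\phi\cdot\bn = 0$ on $\partial\dom$. Since $\dom$ is convex, elliptic regularity (\cite{grisvard:elliptic}) supplies $\|\phi\|_{4,\dom}\le C\|\pdesolell - \udgell\|_{\dom}$. Testing the dual PDE against the error gives
\begin{equation*}
\|\pdesolell - \udgell\|_{\dom}^2 = \int_\dom (\pdesolell-\udgell)\Delta^2\phi\,\ud x.
\end{equation*}
The plan is then to (i) expand the right-hand side via double element-wise Green's identity, (ii) inject Galerkin orthogonality against a quasi-interpolant of $\phi$ to convert every $\phi$ into $\phi - \phi_h$, and (iii) estimate termwise by Cauchy--Schwarz together with the trace inequality and the approximation property.

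For step (i), two global integrations by parts give $\int_\dom \pdesolell\,\Delta^2\phi\,\ud x = (\pdesourceell,\phi)_\dom$, using $\pdesolell,\phi\in H^2_0(\dom)$. On the $\udgell$ side, applying Green's second identity twice on each $\kappa$, summing, and rewriting the boundary fluxes via \eqref{scalarformula}, with many contributions collapsing thanks to $\jump{\nabla\Delta\phi} = \jump{\nabla\phi} = 0$ on $\eint$ and $\phi = \nabla\phi\cdot\bn = 0$ on $\eext$, produces an identity of the form
\begin{equation*}
\|\pdesolell-\udgell\|_{\dom}^2 = (\pdesourceell - \Delta_h^2\udgell,\phi)_\dom + E(\udgell,\phi),
\end{equation*}
where $E(\udgell,\phi)$ is an explicit edge form pairing $\jumptwo{\udgell},\jump{\nabla\udgell},\jumptwo{\Delta\udgell},\jump{\nabla\Delta\udgell}$ against $\mean{\nabla\Delta\phi},\mean{\Delta\phi},\mean{\nabla\phi},\mean{\phi}$, respectively. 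For step (ii), I pick a quasi-interpolant $\phi_h\in\femspace$ of $\phi$ and use Galerkin orthogonality $B(\udgell,\phi_h) = l(\phi_h)$: rewriting $B(\udgell,\phi_h)$ in the consistent form \eqref{bilinear_original} and applying the same element-wise Green's identity to $\int_\dom \Delta_h\udgell\,\Delta_h\phi_h\,\ud x$ yields, after cancellation of the symmetry/consistency fluxes of $B$ against the IBP boundary terms, the analogue of the previous identity with $\phi$ replaced by $\phi_h$ plus an extra $B_p(\udgell,\phi_h)$ contribution. Subtracting, and using $\jumptwo{\phi} = \jump{\nabla\phi} = 0$ so that $B_p(\udgell,\phi) = 0$,
\begin{equation*}
\|\pdesolell - \udgell\|_{\dom}^2 = (\pdesourceell - \Delta_h^2\udgell,\phi - \phi_h)_\dom + E(\udgell,\phi - \phi_h) + B_p(\udgell,\phi - \phi_h).
\end{equation*}

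What remains is routine estimation. Each term is bounded by Cauchy--Schwarz followed by Lemma \ref{lemma:traceinequality} and Lemma \ref{lemma:approximationproperty} applied to $\phi - \phi_h$ with index $m = \min\{4,r+1\}$, i.e.\ $|\phi - \phi_h|_{j,\kappa}\le C h_\kappa^{m-j}|\phi|_{m,\omega_\kappa}$ for $0\le j\le m$. The bulk residual yields the $\mathbf{h}^{4-\lambda/2} = \mathbf{h}^{m}$ weight. The edge pairings $\jump{\nabla\Delta\udgell}\cdot\mean{\phi-\phi_h}$ and $\jumptwo{\Delta\udgell}\cdot\mean{\nabla(\phi-\phi_h)}$ produce the $\mathbf{h}^{(7-\lambda)/2}$ and $\mathbf{h}^{(5-\lambda)/2}$ weights via the trace/approximation bounds $\|\phi-\phi_h\|_\edge\lesssim h_\kappa^{m-1/2}|\phi|_{m,\omega_\kappa}$ and $\|\nabla(\phi-\phi_h)\|_\edge\lesssim h_\kappa^{m-3/2}|\phi|_{m,\omega_\kappa}$. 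The jumps $\jump{\nabla\udgell}$ and $\jumptwo{\udgell}$ appear simultaneously in $E$ (paired with $\mean{\Delta(\phi-\phi_h)}$ and $\mean{\nabla\Delta(\phi-\phi_h)}$) and in $B_p$ (paired with $\xi\jump{\nabla(\phi-\phi_h)}$ and $\sigma\jumptwo{\phi-\phi_h}$, where $\xi = \xi_0\mathbf{h}^{-1}$ and $\sigma = \sigma_0\mathbf{h}^{-3}$), combining to produce the composite $h_\edge^{3-\lambda}(1+\xi_0^2)$ and $h_\edge^{1-\lambda}(1+\sigma_0^2)$ weights. Combining with $|\phi|_{m,\dom}\le\|\phi\|_{4,\dom}\le C\|\pdesolell - \udgell\|_\dom$ and dividing through by $\|\pdesolell - \udgell\|_\dom$ delivers \eqref{ltwo_apost_bound_bound}. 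I expect the main technical obstacle to be the careful bookkeeping of the polynomial-degree cap encoded in $\lambda$: when $r = 2$, Lemma \ref{lemma:approximationproperty} is forced to the weaker index $m = r+1 = 3$ rather than $m = 4$, costing one uniform half-power of $\mathbf{h}$ in every term, which is exactly the content of $\lambda = 2(2-\min\{2,r-1\})$.
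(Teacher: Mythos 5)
Your proposal is correct and follows essentially the same route as the paper: a duality argument with the biharmonic dual problem and convexity-based $H^4$ regularity, element-wise integration by parts, insertion of Galerkin orthogonality to replace the dual solution by its discrete approximant (the paper uses the $L^2$-projection $\Pi$ where you take a quasi-interpolant, an immaterial difference), and termwise Cauchy--Schwarz with the trace and approximation lemmas, which also reproduces the paper's $\lambda$-bookkeeping including the penalty weights $(1+\xi_0^2)$, $(1+\sigma_0^2)$.
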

\begin{proof}
The dual problem
\begin{equation}\label{dualpde}
  \Delta^2 \dualpdesolell= \pdesolell - \udgell=:\tilde{e} \quad\text{in } \dom,
\end{equation}
with homogeneous essential boundary conditions $\dualpdesolell=\nabla \pdesolell \cdot\bn=0 \;\text{on }\partial\dom $ clearly satisfies $\tilde{e} \in L^2(\dom)$ and, therefore, the following regularity estimate holds
\begin{equation}\label{dualpdereg}
  \ltwo{\dualpdesolell}{4,\dom} \le \const{\text{reg}} \ltwo{\tilde{e}}{}.
\end{equation}
Using (\ref{dualpde}), integrating by parts twice, applying (\ref{scalarformula}) and (\ref{lifting}) as well as the regularity of the dual solution, $\dualpdesolell \in H^4(\dom)$, we have
\begin{equation}\label{ltwo_apost_estimator_identity1}
\begin{aligned}
\ltwo{\tilde{e}}{}^2  =& \sum_{\elem \in \themesh} \int_{\elem} \, \Delta^2 \dualpdesolell \; \tilde{e} \, \ud x 
= \int_{\dom} \Delta_h \dualpdesolell  \Delta_h\tilde{e} \ud x  
 - \int_{\Gamma}  \jump{\nabla\tilde{e}} \mean{\Delta \dualpdesolell }  \ud s  +   \int_{\Gamma} \jumptwo{\tilde{e}} \cdot \mean{\nabla  \Delta \dualpdesolell} \ud s.
\end{aligned}
\end{equation}
By using the fact that $\pdesolell$ is a weak solution and integrating the term involving $\Delta_h \dualpdesolell  \Delta_h \udgell$ by parts, we arrive at,
\begin{equation}\label{ltwo_apost_estimator_intbypar2}
\begin{aligned}
\ltwo{\tilde{e}}{}^2   
= &\; B(\pdesolell, \dualpdesolell) - \int_{\dom} \Delta_h \dualpdesolell  \Delta_h \udgell \ud x  
 + \int_{\Gamma}  \jump{\nabla \udgell} \mean{\Delta \dualpdesolell }  \ud s  -   \int_{\Gamma} \jumptwo{\udgell} \cdot \mean{\nabla  \Delta \dualpdesolell} \ud s \\
= &\; l(\dualpdesolell) - \int_{\dom} \dualpdesolell  \Delta^2 \udgell \ud x  
+ \ints{\Gamma_{\dint}}{\mean{\dualpdesolell}\jump{\nabla\Delta_h \udgell}} 
- \ints{\Gamma_{\dint}}{\mean{\nabla \dualpdesolell}\cdot\jumptwo{\Delta_h{\udgell}}} \\
& \;+ \int_{\Gamma} \jump{\nabla \udgell} \mean{\Delta \dualpdesolell }  \ud s  -   \int_{\Gamma} \jumptwo{\udgell} \cdot \mean{\nabla  \Delta \dualpdesolell} \ud s.
\end{aligned}
\end{equation}

\noindent Using the standard orthogonal $L^2$-projection, $\Pi : \mathcal{S} \to \femspace$, of $\dualpdesolell$ , we can derive the following identity by integrating by parts and using (\ref{scalarformula}) and (\ref{lifting}) as follows,
\begin{equation}\label{ltwo_apost_estimator_conferror}
\begin{aligned}
& 0 = l( - \Pi \dualpdesolell) - B(\udgell, - \Pi \dualpdesolell) \\
=&\sum_{\elem \in \themesh} \intx{\elem}{\big((\pdesourceell-\dels{\udgell})(- \Pi \dualpdesolell) -\wavy{L}(\udgell)\Delta_h{(- \Pi \dualpdesolell)}\big)}
+ \ints{\Gamma_{\dint}}{\mean{- \Pi \dualpdesolell}\jump{\nabla\Delta_h \udgell}} \\
&- \ints{\Gamma_{\dint}}{\mean{\nabla(- \Pi \dualpdesolell)}\cdot\jumptwo{\Delta_h{\udgell}}}
  - \ints{\Gamma}{\big(\sigma\jumptwo{\udgell}\cdot\jumptwo{- \Pi \dualpdesolell}+\xi\jump{\nabla\udgell}\jump{\nabla(- \Pi \dualpdesolell)}\big)} .
\end{aligned}
\end{equation}
Using (\ref{lifting}) in (\ref{ltwo_apost_estimator_conferror}) and combining (\ref{ltwo_apost_estimator_intbypar2}) and (\ref{ltwo_apost_estimator_conferror}), we get
\begin{equation}\label{ltwo_apost_estimator_identity2}
\begin{aligned}
 \ltwo{\tilde{e}}{}^2  \!= &  \ltwo{\tilde{e}}{}^2 + l( - \Pi \dualpdesolell) - B(\udgell, - \Pi \dualpdesolell)  \\
 \!= & \! \int_{\dom}  (\pdesourceell - \Delta_h^2\udgell)  {(\dualpdesolell - \Pi \dualpdesolell)} \ud x
+ \! \int_{ \Gamma_{\dint}}\!\! \Big(\mean{\dualpdesolell - \Pi \dualpdesolell}  \jump{\nabla \Delta \udgell}  \ud s
- \jumptwo{\Delta  \udgell} \cdot \mean{\nabla (\dualpdesolell - \Pi \dualpdesolell)} \Big) \ud s  \\
& -\int_{\Gamma} \jumptwo{\udgell} \cdot \Big( \mean{\nabla \Delta (\dualpdesolell - \Pi \dualpdesolell) }  +\sigma_0 \mbf{h}^{-3} \jumptwo{\dualpdesolell - \Pi \dualpdesolell} \Big) \ud s\\
& +\int_{\Gamma} \jump{\nabla(\udgell)} \Big( \mean{ \Delta (\dualpdesolell - \Pi \dualpdesolell) }  +\xi_0 \mbf{h}^{-1} \jump{\nabla(\dualpdesolell - \Pi \dualpdesolell)} \Big) \ud s .
\end{aligned}
\end{equation}
The assertion then follows by applying Young's inequality, the trace inequality (\ref{traceinequality}) where appropriate, the approximation property (\ref{approximationproperty}) and the regularity of the dual problem on each of the terms on the right hand side of (\ref{ltwo_apost_estimator_identity2}).
\end{proof}

\begin{remark} \label{remark:smoothsubspace}
If a smooth $C^1$ subspace of the finite element space exists, such as Argyris elements in two dimensions, or corresponding constructions in three dimensions, it is possible to establish an a posteriori $L^2$ bound without dependence on penalty parameters; indeed, these terms would vanish from (\ref{ltwo_apost_estimator_identity2}) if the projection, $\Pi$, could be defined onto the smooth subspace of $\femspace$.
\end{remark}

\begin{remark} \label{remark:suboptimality}
 It is interesting to note that the a posteriori error bound of (\ref{ltwo_apost_bound_bound}) reflects the suboptimal $L^2$-norm error convergence of the dG method when quadratic polynomials are applied. Similar behaviour is observed theoretically and numerically in \cite{georgoulis_houston} and in \cite{suli.moz:sipg} in the context of the a priori error analysis of the same method.
\end{remark}

\section{DG method for the parabolic problem} \label{section:nummethod}

Throughout the remaining of this work, we shall denote by $\pdesol$ the weak solution of the problem (\ref{modelpde1})--(\ref{modelpde3}) in variational form:
find $\pdesol \in H^1(0,T; H^4(\dom) \cap H^2_0(\dom) )$  such that  
\begin{equation} \label{varmodelpde}
  \begin{aligned}
\langle \pdesol_t,  \phi \rangle + B(\pdesol,\phi)&=\langle \pdesource,\phi\rangle \quad \forall \phi \in H^2_0(\dom),  \\
\pdesol &= \initpdesol \in L^2(\dom) \quad \text{ in } \; \dom \times \{0\}.\quad\quad
  \end{aligned}
\end{equation}
We consider a subdivision of the time interval $(0,T]$ to be the family of intervals $\{(t^{n-1},t^{n}]$ ; $n=1,\dots, N$, with $t^0=0$, $t^{n-1} \le t^{n}$ and $t^N=T\}$ , with local time-step $\timestep_n:=t^{n}-t^{n-1}$. Associated with this time-subdivision, let $\themesh_{n}$, $n=0,\dots, N$, be a sequence of meshes which are
assumed to be \emph{compatible}, in the sense that for any two
consecutive meshes $\themesh_{n-1}$ and $\themesh_{n}$, $\themesh_{n}$ can be obtained
from $\themesh_{n-1}$ by locally coarsening some of its elements and then locally refining some (possibly other) elements.
The finite element space corresponding to $\themesh_{n}$ will be denoted by $\femspacen{n}$ and the respective dG bilinear form by $B^n(\cdot,\cdot)$. The backward Euler-dG method for approximating (\ref{varmodelpde}) is then given by: for each $n=1,\dots,N$, find
\begin{equation}\label{eqn:fully-discrete-Euler-pure}
  \begin{split}
    \ \udg^{n}\in \femspacen{n}\text{ such that  }
    \langle \frac{U^{n}- U^{n-1}}{\timestep_{n}},V\rangle+B^n(\udg^{n},V)=\langle \pdesourcetimeapprox^{n},V\rangle
    \quad \forall V\in \femspacen{n},
  \end{split}
\end{equation}
where $\pdesourcetimeapprox^0(\cdot):=\pdesource(\cdot,0)$ and $\pdesourcetimeapprox^n(\cdot)$ for $n=1,\dots,N$ is a piecewise polynomial of degree $p$ in time $L^2$-projection in time of the source function $\pdesource$. In practice, it suffices to take $p=0$ to achieve a first-order-in-time convergent method. We also set $U^0:=\Pi^0 \initpdesol$, with $\Pi^0: L^2(\dom) \to \femspacen{0}$ is the orthogonal $L^2$-projection operator onto the finite element space $\femspacen{0}$.

\section{A posteriori bounds for the parabolic problem}\label{apost_parabolic}

We shall derive a posteriori error bounds for the backward Euler-dG method (\ref{eqn:fully-discrete-Euler-pure}) measured in $L^\infty(L^2)$- and $L^2(L^2)$-norms. To this end, we shall employ an energy argument (with carefully defined test functions) in conjunction with the elliptic reconstruction technique \cite{MR2034895,lakkis-makridakis:06,georgoulis_lakkis_virtanen}.

We begin by extending the sequence ${\{\udg^n\}}_{n=1,\dots, N}$ of numerical solutions into a continuous piecewise linear
function of time
\begin{equation} \label{eqn:fully-discrete-solution-extended}
  \udg (0)=\Pi^0 \initpdesol \quad \text{ and } \quad \udg(t):= \sfrac{t - t^{n-1}}{\lambda_n} \udg^n + \sfrac{t^{n} - t}{\lambda_n} \udg^{n-1}
\end{equation}
for $t \in (t_{n-1},t_{n}]$ and $n=1,\dots, N$. Further, the \emph{discrete elliptic operator} $A^n:\femspacen{n} \to \femspacen{n}$ is defined by
\begin{equation}\label{ell_rec2}
\text{for}\ \phi \in \femspacen{n},\quad \langle A^n \phi, \chi \rangle = B^n(\phi,\chi)\quad \forall \chi\in \femspacen{n}.
\end{equation}

We now give definitions of the estimators involved in the estimation of the parabolic part of the error. Estimators at time step $n$ are denoted by ${\tiny \infty,n}$ subscript and ${\tiny 2,n}$ subscript will be used for the cases of $L^\infty(L^2)$- and $L^2(L^2)$-bounds presented below, respectively.
\begin{definition}[estimators for the parabolic error] \label{def_estimators}
We define the \emph{coarsening} or \emph{mesh-change estimators} by
\begin{equation}\label{apost_parabolic_coarse_estimators}
\gamma_{\infty,n}  := \frac{1}{\timestep_n}  \ltwo{(I-\Pi^n)\udg^{n-1}}{}^2,\quad 
\gamma_{2,n}  := {\ltwo{(I-\Pi^n)\udg^{n-1}}{}}^2 + \sum_{i=1}^{n-1} {\ltwo{ (\Pi^i - \Pi^{i-1}) \udg^{i-1} }{}}^2,
\end{equation}
the \emph{time-error evolution estimators} by
\begin{equation}\label{apost_parabolic_time_estimators}
\eta_{\infty,n}  := \ltwo{\ellrecsource^n - \ellrecsource^{n-1}}{}^2 {\timestep_n}, \quad
\eta_{2,n}  :=  \ltwo{\ellrecsource^n - \ellrecsource^{n-1}}{}^2 \timestep_n^{2} + \sum_{i=1}^{n-1} \timestep_{i}^{2}  \ltwo{ \ellrecsource^i - \ellrecsource^{i-1}  }{}^2,
\end{equation}
$\ellrecsource^n := A^n \udg^n-\Pi^n \pdesourcetimeapprox^n +\pdesourcetimeapprox^n$; the \emph{data approximation error in time estimators} by
\begin{equation}\label{apost_parabolic_data_estimators}
\beta_{\infty,n}  := \int_{t^{n-1}}^{t^{n}} \ltwo{ \pdesourcetimeapprox^n - \pdesource}{}^2\ud t, \quad
\beta_{2,n}  := \timestep_n \int_{t^{n-1}}^{t^{n}} \ltwo{ \pdesourcetimeapprox^n - \pdesource}{}^2\ud t,
\end{equation}
and an additional space estimator given by
\begin{equation}\label{apost_elliptic_extra_linfty_estimator}
\tilde{\eta}_{\infty,n}  := \ellest\Big(\hat{\themesh}_n, \udg^n-\udg^{n-1},\ellrecsource^n - \ellrecsource^{n-1} \Big)^2,
\end{equation}
where $\hat{\themesh}_n := \themesh_{n} \cap \themesh_{n-1}$ {\it finest common coarsening} of $\themesh_{n}$ and $\themesh_{n-1}$ for each $n=1,\dots, N$.
\end{definition}

Using the notation above, we are ready to state the main result.

\begin{theorem}[a posteriori bound] \label{theorem:apost_parabolic}
Let $\pdesol \in L^2(0,T;H^4(\dom)  \cap H^2_0(\dom))$ be the solution of (\ref{varmodelpde}), $ \udg$
be the approximation obtained by the dG method (\ref{eqn:fully-discrete-Euler-pure}) and defined by (\ref{eqn:fully-discrete-solution-extended}).
Then there exist positive constants $\const{\infty}$ and $\const{2}$, independent of $\themesh_n$, $\mbf{h}$, $\pdesol$ and $\udg$, for any $n=1,\dots, N$ such that
\begin{eqnarray}
\ltwo{e}{L^\infty(0,T;L^2(\dom))} &\le & \const{\infty} \; \Bigg( \ltwo{e(0)}{} + {\left( \sum_{n=1}^{N} \left( \gamma_{\infty,n} +\eta_{\infty,n}+\beta_{\infty,n} \right) \timestep_n \right)}^{\frac{1}{2}} \nonumber \\
\phantom{\ltwo{e}{L^\infty(0,T;L^2(\dom))}} &\phantom{\le} & \phantom{\const{\infty} \; \Bigg(  }
+ {\left( \sum_{n=1}^{N} \tilde{\eta}_{\infty,n} \right)}^{\frac{1}{2}} 
+ \max_{0 \le n \le N} \{{\ellest \left(\themesh_n, \udg^n, \ellrecsource^n \right)}\} \Bigg) \label{apost_parabolic_boundLinfty}  \\
\ltwo{e}{L^2(0,T;L^2(\dom))} &\le & \const{2} \; \Bigg(   \ltwo{e(0)}{} + {\left( \sum_{n=1}^{N} \left( \gamma_{2,n} +\eta_{2,n}+\beta_{2,n} \right) \; \timestep_n \right)}^{\frac{1}{2}} \nonumber \\
\phantom{\ltwo{e}{L^2(0,T;L^2(\dom))} }&\phantom{\le }& 
\phantom{\const{2} \; \Bigg(   \ltwo{e(0)}{} + \sum_{n=1}^{N} \Bigg( \eta_{1,2,n} } + {\left( \sum_{n=1}^{N} { \; {\ellest \left(\themesh_n, \udg^n, \ellrecsource^n \right)}^2 \; \timestep_n }\right)}^{\frac{1}{2}} \Bigg) . \label{apost_parabolic_boundL2}
\end{eqnarray}

\end{theorem}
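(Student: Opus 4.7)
The plan is to use the elliptic reconstruction framework to split off an elliptic residual handled by Theorem \ref{theorem:ltwo_apost_biharmonic}, then control the remaining parabolic residual by an energy argument with a different test function in each norm. First, for each $n$, I define the elliptic reconstruction $\omega^n \in H^2_0(\dom)$ as the weak solution of $\Delta^2 \omega^n = \ellrecsource^n$ with homogeneous essential boundary conditions. Using the definition of $A^n$ and the self-adjointness of $\Pi^n$, one verifies $B^n(\udg^n, V) = \langle \ellrecsource^n, V\rangle$ for every $V \in \femspacen{n}$, so $\udg^n$ is precisely the dG approximation of $\omega^n$, and Theorem \ref{theorem:ltwo_apost_biharmonic} yields $\ltwo{\omega^n - \udg^n}{} \le \ellest(\themesh_n, \udg^n, \ellrecsource^n)$. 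Extending $\omega$ continuously and piecewise-linearly in time as in (\ref{eqn:fully-discrete-solution-extended}), I split $e = \pdesol - \udg = \parerr + \ellerr$ with $\parerr := \pdesol - \omega$ and $\ellerr := \omega - \udg$, so that $\ltwo{\ellerr(t)}{} \le \max_n \ellest(\themesh_n,\udg^n,\ellrecsource^n)$ pointwise and $\int_0^T \ltwo{\ellerr}{}^2\,\ud t \le \sum_n \ellest(\themesh_n,\udg^n,\ellrecsource^n)^2 \timestep_n$.

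I then derive a parabolic error equation for $\parerr$. For $t \in (t^{n-1}, t^n]$ and any $v \in H^2_0(\dom)$, using $\omega_t = (\omega^n-\omega^{n-1})/\timestep_n$, the variational formulation (\ref{varmodelpde}), and the scheme (\ref{eqn:fully-discrete-Euler-pure}) tested against $\Pi^n v$, together with $(I-\Pi^n)\udg^n = 0$ and $\omega^n - \omega^{n-1} = (\udg^n - \udg^{n-1}) + (\ellerr^n - \ellerr^{n-1})$, I obtain
\begin{equation*}
\langle \parerr_t, v\rangle + \intx{\dom}{\Delta \parerr\,\Delta v} = \langle \pdesource - \pdesourcetimeapprox^n, v\rangle + (1-\ell^n(t))\langle \ellrecsource^n - \ellrecsource^{n-1}, v\rangle + \big\langle \tfrac{1}{\timestep_n}(I-\Pi^n)\udg^{n-1}, v\big\rangle - \big\langle \tfrac{1}{\timestep_n}(\ellerr^n - \ellerr^{n-1}), v\big\rangle,
\end{equation*}
where $\ell^n(t) := (t-t^{n-1})/\timestep_n$. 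The four pieces on the right are the prototypes of the data-approximation, time-evolution, mesh-change and additional estimators in Definition \ref{def_estimators}; the last one is dominated by $\ellest(\hat\themesh_n, \udg^n-\udg^{n-1}, \ellrecsource^n-\ellrecsource^{n-1})\ltwo{v}{}/\timestep_n$ via Theorem \ref{theorem:ltwo_apost_biharmonic} applied on the finest common coarsening.

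For (\ref{apost_parabolic_boundLinfty}) I test the error equation with $v = \parerr$ and integrate over $[0, t^\star]$, with $t^\star$ realising the $L^\infty(L^2)$-norm of $\parerr$. The coercivity term $\int_0^{t^\star}\intx{\dom}{|\Delta\parerr|^2}\,\ud t$ is nonnegative and dropped, while Cauchy--Schwarz and Young's inequality on the right-hand side, combined with $\ltwo{\parerr(0)}{} \le \ltwo{e(0)}{} + \ltwo{\ellerr(0)}{}$, produce precisely the $\gamma_{\infty,n}, \eta_{\infty,n}, \beta_{\infty,n}, \tilde\eta_{\infty,n}$ contributions after multiplying by $\timestep_n$ and summing over $n$. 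The triangle inequality $\ltwo{e(t)}{} \le \ltwo{\parerr(t)}{} + \ltwo{\ellerr(t)}{}$ and the pointwise bound on $\ellerr$ then complete (\ref{apost_parabolic_boundLinfty}).

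For (\ref{apost_parabolic_boundL2}) I use a Baker wave-type test function: for fixed $t \in (0,T]$ set $v(s) := \int_s^t \parerr(\tau)\,\ud\tau$, so $v(t)=0$, $v_s = -\parerr$, and $v(s) \in H^2_0(\dom)$ pointwise. Testing the parabolic error equation against this $v$ and integrating by parts in time on $[0,t]$ converts $\int_0^t \langle \parerr_s, v\rangle\,\ud s$ into $\int_0^t \ltwo{\parerr}{}^2\,\ud s - \langle \parerr(0), v(0)\rangle$, while $\int_0^t \intx{\dom}{\Delta\parerr\,\Delta v}\,\ud s = \tfrac12 \intx{\dom}{|\Delta v(0)|^2} \ge 0$ is kept on the left. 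The bound $\ltwo{v(s)}{}^2 \le (t-s)\int_s^t \ltwo{\parerr}{}^2\,\ud\tau$ from Cauchy--Schwarz, followed by a slab-wise decomposition of the right-hand side and Young's inequality, yields the $\timestep_n$-weighted $\gamma_{2,n}, \eta_{2,n}, \beta_{2,n}$ contributions; the cumulative inner sums $\sum_{i<n}$ in $\gamma_{2,n}$ and $\eta_{2,n}$ emerge because each residual that is constant on slab $i$ is tested against $v(s)$ for $s \in (t^{i-1},t^i]$, and $v(s)$ depends on $\parerr$-values throughout the remaining interval $[s,t]$, so jumps on earlier slabs contribute with weight proportional to the remaining time to $T$. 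The main obstacle will be the careful slab-by-slab bookkeeping of these cumulative contributions and ensuring that Lemma \ref{lemma:pfinequality} supplies a uniform Poincar\'e--Friedrichs constant for the $L^2$-norm of the cumulative test function, so the final constants $\const{\infty}$ and $\const{2}$ remain independent of ${\bf h}$ and $\themesh_n$.
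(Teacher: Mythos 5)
Your overall strategy is the same as the paper's: you define the elliptic reconstruction exactly as in (\ref{ell_rec}), verify that $\udg^n$ is the dG approximation of $\ellrec^n$ so that Theorem \ref{theorem:ltwo_apost_biharmonic} controls $\ellerr^n$, derive an error identity that is (up to your opposite sign convention for $e$) precisely Lemma \ref{lemma:parerridentity}, test with $\parerr$ for the $L^\infty(L^2)$ bound and with Baker's test function for the $L^2(L^2)$ bound, and close with the triangle inequality. The $L^\infty(L^2)$ part, including the treatment of the $\ellerr_t$-term through $\ellest\big(\hat\themesh_n,\udg^n-\udg^{n-1},\ellrecsource^n-\ellrecsource^{n-1}\big)$ on the finest common coarsening, mirrors Lemmata \ref{lemma:linftyextratermsestimate} and \ref{lemma:ellipticerrorsestimate2} and is sound.

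There are, however, two gaps in the $L^2(L^2)$ argument. First, you never say how the fourth residual term $-\langle(\ellerr^n-\ellerr^{n-1})/\timestep_n,v\rangle$ is absorbed in that case; the only device you offer for it (the finest-common-coarsening bound) would leave a $\tilde{\eta}$-type contribution which does not appear in (\ref{apost_parabolic_boundL2}). To obtain the stated bound you must treat this term differently: integrate it by parts in time using $v_s=-\parerr$ and $v(T)=0$, turning it into $\int_0^T\langle\ellerr,\parerr\rangle\,\ud t$ plus a boundary term at $t=0$, and then apply Cauchy--Schwarz together with your bound on $\int_0^T\ltwo{\ellerr}{}^2\,\ud t$; this is exactly what the paper does by folding $\ellerr_t$ into $e_t$ before testing, cf.\ (\ref{eqn:apost_parabolic_ltwo_parerridentity}) and Lemma \ref{lemma:ellipticerrorsestimate1}. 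Second, the mechanism you invoke for the cumulative sums in $\gamma_{2,n}$ and $\eta_{2,n}$ is not the estimate $\ltwo{v(s)}{}^2\le(t-s)\int_s^t\ltwo{\parerr}{}^2\,\ud\tau$: used slab by slab, that crude bound yields weights of order $T$ rather than $\timestep_n$ and does not reproduce the estimators of Definition \ref{def_estimators}. What is actually needed is the time-antiderivative construction (\ref{def:auxiliaryfunc})--(\ref{def:auxiliaryfunc2}) together with integration by parts in time (equivalently, Fubini in the double time integral), as in Lemmata \ref{lemma:ltewoextratermsestimate1} and \ref{lemma:ltewoextratermsestimate2}; the earlier-slab residuals then enter through the accumulated quantities $\psi^{n-1}$ and $\theta^{n-1}$ with weights given by the earlier step sizes $\timestep_i$, not, as you assert, ``with weight proportional to the remaining time to $T$''. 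Both repairs are available within your framework, but as written the $L^2(L^2)$ bound you would prove is not the one stated in the theorem.
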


The proof of this theorem will be the content of the remaining of this section, split into a number of intermediate results.

We begin by defining the \emph{elliptic reconstruction} $\ellrec^n \in H^2_0(\dom)$, of $\udg^n$ to be
the solution of the elliptic problem
\begin{equation}\label{ell_rec}
B^n(\ellrec^n, v) = \langle \ellrecsource^n, v \rangle\quad \forall v \in H^2_0(\dom)
\end{equation}
where, as above, $\ellrecsource^n := A^n \udg^n-\Pi^n \pdesourcetimeapprox^n +\pdesourcetimeapprox^n$. 
We note that under the assumptions on the domain $\Omega$, we also have $\ellrec^n \in H^4(\dom)$.
We also extend the elliptic reconstruction into a continuous piecewise linear-in-time function
\begin{equation} \label{eqn:fully-discrete-ellrec-extended}
  \ellrec(t):= \sfrac{t - t^{n-1}}{\lambda_n} \ellrec^n + \sfrac{t^{n} - t}{\lambda_n} \ellrec^{n-1}
\end{equation}
for $t \in (t_{n-1},t_{n}]$ and $n=1,\dots, N$.
Finally, we introduce the error decomposition
\begin{equation}\label{errorsplitting}
e:=\udg-\pdesol=\parerr-\ellerr,\ \text{where}\ \ellerr:=\ellrec-\udg,\ \text{and}\ \parerr:=\ellrec-\pdesol,
\end{equation}
where $\parerr$ and $\ellerr$ are understood as the \emph{parabolic} and \emph{elliptic} error, respectively, and we set $\ellerr_n:=\ellerr(t_n)$.

\begin{lemma}[Error identity] \label{lemma:parerridentity}
For all $t \in (t_{n-1},t_{n}]$, $n=1,2,\dots,N$, we have
\begin{equation} \label{eqn:parerridentity}
\langle \parerr_t,v\rangle + B^n(\parerr,v) =\langle \ellerr_t,v\rangle + \langle (I - \Pi^n) \udg_t ,v\rangle + \sfrac{t- t^n}{\timestep_n}\langle \ellrecsource^n - \ellrecsource^{n-1},v\rangle +\langle \pdesourcetimeapprox^n - \pdesource ,v\rangle,
\end{equation}
for any $v\in H^2_0(\dom)$, with $I$ denoting the identity mapping.
\end{lemma}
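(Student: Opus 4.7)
The plan is to rewrite the parabolic residual by peeling off the elliptic reconstruction identity and what remains of the fully discrete scheme. Fix $t\in(t^{n-1},t^n]$ and $v\in H^2_0(\dom)$. Since both $\ellrec$ and $\udg$ are continuous piecewise linear in time on this interval, direct differentiation gives $\ellrec_t-\udg_t=(\ellrec^n-\udg^n)/\timestep_n-(\ellrec^{n-1}-\udg^{n-1})/\timestep_n=\ellerr_t$. Hence
\[
\langle \parerr_t,v\rangle + B^n(\parerr,v) = \langle \ellerr_t,v\rangle + \langle \udg_t-\pdesol_t,v\rangle + B^n(\ellrec,v) - B^n(\pdesol,v).
\]
Because $\pdesol,v\in H^2_0(\dom)$, the jump and lifting terms in $B^n$ vanish and $B^n(\pdesol,v)=\int_\dom\Delta\pdesol\,\Delta v\,\ud x$, so the weak form (\ref{varmodelpde}) yields $\langle \pdesol_t,v\rangle+B^n(\pdesol,v)=\langle \pdesource,v\rangle$. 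Substituting reduces matters to identifying
\[
\langle \udg_t,v\rangle + B^n(\ellrec,v) - \langle \pdesource,v\rangle
\]
with the three non-$\ellerr_t$ terms on the right-hand side of (\ref{eqn:parerridentity}).

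Next I would evaluate $B^n(\ellrec(t),v)$. Both $\ellrec^n$ and $\ellrec^{n-1}$ lie in $H^2_0(\dom)$, so the same cancellation as above gives $B^n(\ellrec^{n-1},v)=\int_\dom\Delta\ellrec^{n-1}\Delta v\,\ud x=B^{n-1}(\ellrec^{n-1},v)=\langle \ellrecsource^{n-1},v\rangle$ by the elliptic reconstruction definition (\ref{ell_rec}); the analogous identity for level $n$ is immediate. Linearly interpolating,
\[
B^n(\ellrec(t),v)=\frac{t-t^{n-1}}{\timestep_n}\langle \ellrecsource^n,v\rangle+\frac{t^n-t}{\timestep_n}\langle \ellrecsource^{n-1},v\rangle=\langle \ellrecsource^n,v\rangle+\frac{t-t^n}{\timestep_n}\langle \ellrecsource^n-\ellrecsource^{n-1},v\rangle,
\]
which already exhibits the $(t-t^n)/\timestep_n$ coefficient that appears in the statement.

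The remaining task is to handle $\langle \udg_t,v\rangle$, and this is the delicate step: $\udg_t|_{(t^{n-1},t^n]}=(\udg^n-\udg^{n-1})/\timestep_n$ mixes $\udg^n\in\femspacen{n}$ with $\udg^{n-1}\in\femspacen{n-1}$, so $\udg_t$ need not lie in $\femspacen{n}$. The fix is to split $\langle \udg_t,v\rangle=\langle (I-\Pi^n)\udg_t,v\rangle+\langle \udg_t,\Pi^n v\rangle$ using the self-adjointness of $\Pi^n$, and apply the scheme (\ref{eqn:fully-discrete-Euler-pure}) with the admissible test function $\Pi^n v\in\femspacen{n}$ to obtain $\langle \udg_t,\Pi^n v\rangle=\langle \pdesourcetimeapprox^n,\Pi^n v\rangle-B^n(\udg^n,\Pi^n v)=\langle \pdesourcetimeapprox^n-A^n\udg^n,\Pi^n v\rangle$ by the definition (\ref{ell_rec2}) of $A^n$. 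Recalling $\ellrecsource^n=A^n\udg^n-\Pi^n\pdesourcetimeapprox^n+\pdesourcetimeapprox^n$, one has $\pdesourcetimeapprox^n-A^n\udg^n=\Pi^n\pdesourcetimeapprox^n-A^n\udg^n=\Pi^n(\pdesourcetimeapprox^n-\ellrecsource^n)\in\femspacen{n}$; the self-adjointness of $\Pi^n$ and its idempotency then remove both projections, giving $\langle \udg_t,\Pi^n v\rangle=\langle \pdesourcetimeapprox^n-\ellrecsource^n,v\rangle$.

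Collecting these three computations,
\[
\langle \udg_t,v\rangle + B^n(\ellrec,v) - \langle \pdesource,v\rangle = \langle (I-\Pi^n)\udg_t,v\rangle + \frac{t-t^n}{\timestep_n}\langle \ellrecsource^n-\ellrecsource^{n-1},v\rangle + \langle \pdesourcetimeapprox^n-\pdesource,v\rangle,
\]
which is exactly (\ref{eqn:parerridentity}) after reinstating $\langle \ellerr_t,v\rangle$. The only real subtlety is the reconciliation of the mismatched finite element spaces through $\Pi^n$ acting on $\udg_t$ and on the test function; once that is in place, the rest is algebraic bookkeeping driven by the definition of $\ellrecsource^n$.
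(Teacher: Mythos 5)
Your proposal is correct and follows essentially the same route as the paper: split off $\ellerr_t$, use the weak form for $\pdesol$, exploit the definitions (\ref{ell_rec}), (\ref{ell_rec2}) and the linear-in-time structure of $\ellrec$ to produce the $(t-t^n)/\timestep_n$ term, and bridge the test function to $\femspacen{n}$ via $\Pi^n$ together with the scheme (\ref{eqn:fully-discrete-Euler-pure}). The only cosmetic slip is the line ``$\pdesourcetimeapprox^n-A^n\udg^n=\Pi^n\pdesourcetimeapprox^n-A^n\udg^n$'', which holds only after pairing with $\Pi^n v$ (equivalently, note $\pdesourcetimeapprox^n-\ellrecsource^n=\Pi^n\pdesourcetimeapprox^n-A^n\udg^n\in\femspacen{n}$), and your conclusion is unaffected.
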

\begin{proof}
Firstly, from (\ref{ell_rec}) and (\ref{ell_rec2}) we have
\begin{equation} \label{eqn:parerridentity1}
B^n(\ellrec^n,v) - \langle \pdesourcetimeapprox^n ,v\rangle = B^n(\udg^n,\Pi^n v) - \langle \Pi^n \pdesourcetimeapprox^n , \Pi^n v\rangle.
\end{equation}
Also, using the method (\ref{eqn:fully-discrete-Euler-pure}) and the definition of the $L^2$-projection we deduce
\begin{equation}\label{eqn:parerridentity2}
\langle \udg_t ,v\rangle  
= \langle (I - \Pi^n) \udg_t ,v\rangle +  \langle \udg_t ,\Pi^n v\rangle
 = \langle (I - \Pi^n) \udg_t ,v\rangle - ( B^n(\udg_n,\Pi^n v) - \langle \Pi^n\pdesourcetimeapprox^n ,\Pi^n v\rangle) .
\end{equation}
For the elliptic reconstruction error we also have,
\begin{equation}\label{eqn:parerridentity3}
B^n(\ellrec - \ellrec^n,v) 
=  \sfrac{t- t^n}{\timestep_n}\langle \ellrecsource^n - \ellrecsource^{n-1},v\rangle .
\end{equation}
Lastly, for the terms on the left hand side of (\ref{eqn:parerridentity}), we compute
\begin{equation}\label{eqn:parerridentity4}
\langle e_t,v\rangle + B^n(\parerr,v) 
= \langle \udg_t,v\rangle + B^n(\ellrec ,v) - \left( \langle \pdesol_t,v\rangle + B^n(\pdesol,v) \right)
= \langle \udg_t,v\rangle + B^n(\ellrec ,v) -  \langle \pdesource ,v\rangle.
\end{equation}
Using (\ref{eqn:parerridentity1}), (\ref{eqn:parerridentity1}), and (\ref{eqn:parerridentity3}) in (\ref{eqn:parerridentity4}), along with the identity $e=\parerr-\ellerr$ completes the proof.
\end{proof}

The a posteriori bounds (\ref{apost_parabolic_boundLinfty}) and (\ref{apost_parabolic_boundL2}) will be derived by selecting special test functions $v$ in the energy identity  (\ref{eqn:parerridentity}) above, along with estimation of the terms on the right-hand side of (\ref{eqn:parerridentity}).
More specifically, we consider the following two test functions, $\testfuncLinfty:=\parerr$ for the $L^\infty(L^2)$ case, and  
\begin{equation} \label{def:testfuncLtwo}
\testfuncLtwo (t,\cdot) := \int_t^{T} \parerr(s,\cdot) \ud s,\quad t\in[0,T],
\end{equation}
for the $L^2(L^2)$ case; this choice is motivated by Baker \cite{baker:76}, who used a similar construction for the proof of a priori bounds for the second order wave problem. The latter test function has most notably the following properties:
\begin{equation}\label{prop:testfuncLtwo}
\begin{aligned}
&\testfuncLtwo \in H^4(\dom) \cap H^2_0(\dom) \quad \text{ as } \parerr \in H^4(\dom) \cap H^2_0(\dom)\; \text{a.e. in}\ [0,T] ,\\
&\testfuncLtwo (T,\cdot)=0=\Delta \testfuncLtwo (T,\cdot),\quad \nabla \testfuncLtwo(T,\cdot)=0,\quad\text{and}\quad \\
&\testfuncLtwo_t(t,\cdot)=-\parerr(t,\cdot),\quad \text{a.e. in}\ [0,T].
\end{aligned}
\end{equation}

Next, we consider two auxiliary functions which are needed in the consequent proofs. More specifically, on each interval $t \in (t_{n-1},t_{n}]$, for $n=1,\dots, N$, we define
\begin{equation} \label{def:auxiliaryfunc}
\tilde{G}(t) := (I-\Pi^n)\udg + \psi^{n}, \quad\text{ with }\quad \psi^{n}:= -(I-\Pi^n)\udg^{n-1}+\psi^{n-1},\quad \psi^0 :=0,
\end{equation}
and
\begin{equation} \label{def:auxiliaryfunc2}
G(t) := \sfrac{\timestep_{n}}{2} \left( \sfrac{t-t^{n}}{\timestep_{n}} \right)^2 (\ellrecsource^n - \ellrecsource^{n-1}) + \theta^{n}
, \quad\text{ with }\quad\theta^{n}:=  - \sfrac{\timestep_{n}}{2} (\ellrecsource^n - \ellrecsource^{n-1}) + \theta^{n-1},\quad \theta^0=0.
\end{equation}
We note that, for each  $n=1,\dots, N$, we then have $\tilde{G}(t^{n})=\psi^{n}$,  ${{G}}(t^{n})=\theta^{n}$,
\begin{equation} \label{prop:auxiliaryfunc}
\tilde{G}_t(t) := (I-\Pi^n)\udg_t, \quad \text{ and } \quad {G}_t(t) := \sfrac{t- t^n}{\timestep_n} \left(\ellrecsource^n - \ellrecsource^{n-1} \right) .
\end{equation}

The following estimates will be used in the proof of Theorem \ref{theorem:apost_parabolic}.

\begin{lemma} \label{lemma:linftyextratermsestimate}
Let $\tau \in (0,T]$. Then, we have
\begin{eqnarray} 
\int_{0}^{\tau} \langle {\tilde{G}}_t ,\parerr\rangle \; \ud t &\le& \sum_{n=1}^{N} \ltwo{(\Pi^n - I) \udg^{n-1}}{} \max_{0 \le t \le T} \ltwo{\parerr}{} \label{eqn:linftyextratermsestimate1} \\
\int_{0}^{\tau} \langle G_t ,\parerr\rangle \; \ud t &\le&  \sum_{n=1}^{N} \timestep_{n} \ltwo{\ellrecsource^{n} - \ellrecsource^{n-1}}{}
\max_{0 \le t \le T} \ltwo{\parerr}{}  \label{eqn:linftyextratermsestimate2} \\
\int_{0}^{\tau} \langle \pdesourcetimeapprox^n - \pdesource ,\parerr\rangle \; \ud t &\le&  \int_{0}^{\tau}  \ltwo{\pdesourcetimeapprox^n - \pdesource }{} \ud t  \max_{0 \le t \le T} \ltwo{\parerr}{}  \label{eqn:linftyextratermsestimate3} .
\end{eqnarray}
\end{lemma}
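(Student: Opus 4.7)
The three estimates are all applications of the Cauchy--Schwarz inequality, combined with the pointwise-in-time representations of $\tilde{G}_t$ and $G_t$ already recorded in (\ref{prop:auxiliaryfunc}). I would dispose of them in order of increasing subtlety, extracting in each case a constant-in-space factor and then the supremum $\max_{0\le t\le T}\ltwo{\parerr}{}$ from the time integral.

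For (\ref{eqn:linftyextratermsestimate3}), the spatial Cauchy--Schwarz inequality gives $\langle \pdesourcetimeapprox^n-\pdesource,\parerr\rangle\le \ltwo{\pdesourcetimeapprox^n-\pdesource}{}\ltwo{\parerr}{}$. Pulling the maximum of $\ltwo{\parerr}{}$ out of the time integral finishes the estimate immediately.

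For (\ref{eqn:linftyextratermsestimate2}), I would use (\ref{prop:auxiliaryfunc}) to write, on each subinterval $(t^{n-1},t^n]$,
\[
G_t(t) = \sfrac{t-t^n}{\timestep_n}\bigl(\ellrecsource^n-\ellrecsource^{n-1}\bigr),
\]
and observe $|t-t^n|/\timestep_n\le 1$ there. Cauchy--Schwarz in space then yields $\int_{t^{n-1}}^{t^n}\langle G_t,\parerr\rangle\,\ud t \le \ltwo{\ellrecsource^n-\ellrecsource^{n-1}}{}\int_{t^{n-1}}^{t^n}\ltwo{\parerr}{}\,\ud t \le \timestep_n\,\ltwo{\ellrecsource^n-\ellrecsource^{n-1}}{}\max\ltwo{\parerr}{}$. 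Summing over the subintervals contained in $[0,\tau]$ (and extending the sum to $n=N$, since all contributions are nonnegative after Cauchy--Schwarz) delivers the stated bound.

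For (\ref{eqn:linftyextratermsestimate1}), the key small observation is the cancellation that converts the $\timestep_n^{-1}$ from time differentiation into the factor $\timestep_n$ needed to absorb the time integration. Since $\udg$ is defined in (\ref{eqn:fully-discrete-solution-extended}) to be piecewise linear in time, on $(t^{n-1},t^n]$ one has $\udg_t=(\udg^n-\udg^{n-1})/\timestep_n$, so (\ref{prop:auxiliaryfunc}) gives $\tilde{G}_t=(I-\Pi^n)(\udg^n-\udg^{n-1})/\timestep_n$; because $\udg^n\in\femspacen{n}$, we have $(I-\Pi^n)\udg^n=0$ and hence $\tilde{G}_t=(\Pi^n-I)\udg^{n-1}/\timestep_n$. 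Cauchy--Schwarz, integration over $(t^{n-1},t^n]$, and bounding the time integral of $\ltwo{\parerr}{}$ by $\timestep_n\max\ltwo{\parerr}{}$ cancels the $1/\timestep_n$ and gives (\ref{eqn:linftyextratermsestimate1}) after summation. There is no substantive obstacle here; the only point that needs to be highlighted is this cancellation $(I-\Pi^n)\udg^n=0$, together with the fact that $\Pi^n$ is self-adjoint on $L^2(\dom)$ if one prefers the alternative derivation passing through the adjoint in the duality pairing.
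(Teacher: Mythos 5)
Your argument is correct and is exactly the paper's approach: the paper disposes of all three estimates in one line as ``immediate via Cauchy--Schwarz-in-space and H\"older-in-time inequalities,'' which is precisely what you carry out, merely spelling out the representations of $\tilde{G}_t$ and $G_t$ from (\ref{prop:auxiliaryfunc}) and the cancellation $(I-\Pi^n)\udg^n=0$ that the paper leaves implicit.
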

\begin{proof}
The proofs of these estimates are immediate via Cauchy-Schwarz-in-space and H\"older-in-time inequalities. 
\end{proof}

In the following three lemmata, we prove bounds for the corresponding terms to the ones in Lemma \ref{lemma:linftyextratermsestimate} when testing with $\testfuncLtwo$ given in (\ref{def:testfuncLtwo}).
\begin{lemma} \label{lemma:ltewoextratermsestimate1}
With the above notation, we have
\begin{equation} 
\label{eqn:ltwoextratermsestimate1} 
\sum_{n=1}^{N} \int_{t^{n-1}}^{t^{n}}\!\! \langle (I - \Pi^n) \udg_t , \testfuncLtwo \rangle \; \ud t \le \sum_{n=1}^{N} \Big( \timestep_n {\ltwo{(I - \Pi^n) \udg^{n-1}}{}}^2   
 + \timestep_n \ltwo{ \sum_{i=1}^{n-1} (\Pi^i - \Pi^{i-1}) \udg^{i-1} }{}^2 \Big)^{\frac{1}{2}} \Big(\int_{t^{n-1}}^{t^{n}}  \ltwo{\parerr}{}^2\ud t \Big)^{\frac{1}{2}}  .  
\end{equation}
\end{lemma}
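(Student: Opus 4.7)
The key observation is that the auxiliary function $\tilde{G}$ from (\ref{def:auxiliaryfunc}) was tailored so that $\tilde{G}_t = (I-\Pi^n)\udg_t$ on each subinterval $(t^{n-1},t^n]$, by (\ref{prop:auxiliaryfunc}). I would replace $(I-\Pi^n)\udg_t$ with $\tilde{G}_t$ and integrate by parts in time on each subinterval, exploiting $\testfuncLtwo_t = -\parerr$ from (\ref{prop:testfuncLtwo}), which yields
\begin{equation*}
\int_{t^{n-1}}^{t^n}\langle \tilde{G}_t,\testfuncLtwo\rangle\,\ud t = \langle \tilde{G}(t^n),\testfuncLtwo(t^n)\rangle - \lim_{t\to (t^{n-1})^+}\langle \tilde{G}(t),\testfuncLtwo(t^{n-1})\rangle + \int_{t^{n-1}}^{t^n}\langle \tilde{G},\parerr\rangle\,\ud t.
\end{equation*}

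The crux is to show the boundary contributions telescope to zero when summed over $n$. From (\ref{def:auxiliaryfunc}), using $\udg^n\in\femspacen{n}$ one has $\tilde{G}(t^n)=\psi^n$, while the right limit at the left endpoint satisfies $\lim_{t\to (t^{n-1})^+}\tilde{G}(t) = (I-\Pi^n)\udg^{n-1}+\psi^n = \psi^{n-1}$ by the very recursion defining $\psi^n$; hence $\tilde{G}$ is continuous across the time nodes. Summing over $n$ and invoking $\psi^0=0$ together with $\testfuncLtwo(T)=0$, all boundary terms vanish, leaving
\begin{equation*}
\sum_{n=1}^N\int_{t^{n-1}}^{t^n}\langle (I-\Pi^n)\udg_t,\testfuncLtwo\rangle\,\ud t = \sum_{n=1}^N\int_{t^{n-1}}^{t^n}\langle \tilde{G},\parerr\rangle\,\ud t.
\end{equation*}

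It then remains to bound each right-hand integral. A short induction based on $\udg^{i-1}\in\femspacen{i-1}$, so that $(I-\Pi^i)\udg^{i-1} = -(\Pi^i-\Pi^{i-1})\udg^{i-1}$, gives $\psi^n = \sum_{i=1}^n(\Pi^i-\Pi^{i-1})\udg^{i-1}$. Combined with the interpolation formula (\ref{eqn:fully-discrete-solution-extended}) — which reduces $(I-\Pi^n)\udg(t)$ to $\tfrac{t^n-t}{\lambda_n}(I-\Pi^n)\udg^{n-1}$ — and cancelling the $i=n$ contribution against this, one obtains the explicit representation
\begin{equation*}
\tilde{G}(t) = \tfrac{t-t^{n-1}}{\lambda_n}(\Pi^n-\Pi^{n-1})\udg^{n-1} + \sum_{i=1}^{n-1}(\Pi^i-\Pi^{i-1})\udg^{i-1} \quad \text{on } (t^{n-1},t^n].
\end{equation*}
Since the temporal weight lies in $[0,1]$, Cauchy--Schwarz in space followed by Cauchy--Schwarz in time on each subinterval produces $\int_{t^{n-1}}^{t^n}\|\tilde{G}\|^2\,\ud t \lesssim \lambda_n\bigl(\|(I-\Pi^n)\udg^{n-1}\|^2 + \|\sum_{i=1}^{n-1}(\Pi^i-\Pi^{i-1})\udg^{i-1}\|^2\bigr)$, which delivers the asserted bound after absorbing universal constants.

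\textbf{Main obstacle.} The only delicate point is recognising that the recursive definition of $\psi^n$ in (\ref{def:auxiliaryfunc}) was engineered precisely to make $\tilde{G}$ continuous at every $t^n$ and to vanish at the temporal endpoints through $\psi^0=0$ and $\testfuncLtwo(T)=0$; without this, the boundary data generated by the time-integration-by-parts would pollute the estimate with uncontrolled $(I-\Pi^n)\udg^{n-1}$-type contributions at every time node, and the telescoping that converts the original integrand into $\langle \tilde{G},\parerr\rangle$ would fail.
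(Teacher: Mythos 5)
Your proposal is correct and follows essentially the same route as the paper's proof: rewrite the integrand via $\tilde{G}_t$, integrate by parts in time so that the nodal boundary terms telescope to zero (using $\tilde{G}(t^n)=\psi^n$, $\psi^0=0$ and $\testfuncLtwo(T)=0$), apply Cauchy--Schwarz in space and time, and bound $\int_{t^{n-1}}^{t^n}\ltwo{\tilde{G}}{}^2\,\ud t$ after identifying $\psi^{n-1}=\sum_{i=1}^{n-1}(\Pi^i-\Pi^{i-1})\udg^{i-1}$. You merely spell out the continuity/telescoping of $\tilde{G}$ and its explicit representation in more detail than the paper does, at the harmless cost of an absorbed universal constant in the final step.
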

\begin{proof}
Recalling the definition of $\tilde{G}$, an integration by parts with respect to time gives
\begin{equation} 
\label{eqn:ltwoextratermsestimateproof1} 
\sum_{n=1}^{N} \int_{t^{n-1}}^{t^{n}}  \langle (I - \Pi^n) \udg_t , \testfuncLtwo \rangle \; \ud t  = \sum_{n=1}^{N} \left[ \langle \tilde{G}(t) , \testfuncLtwo(t) \rangle \right]_{t^{n-1}}^{t^{n}}  + \sum_{n=1}^{N} \int_{t^{n-1}}^{t^{n}} \langle \tilde{G} , - \testfuncLtwo_t \rangle \; \ud t  
= \sum_{n=1}^{N} \int_{t^{n-1}}^{t^{n}} \langle \tilde{G} , - \testfuncLtwo_t \rangle \; \ud t.  
\end{equation}
We recall the properties of $\testfuncLtwo$ in (\ref{prop:testfuncLtwo}) and we estimate theright-hand term further:
\begin{equation} 
\label{eqn:ltwoextratermsestimateproof2} 
\sum_{n=1}^{N} \int_{t^{n-1}}^{t^{n}} \langle \tilde{G} , - \testfuncLtwo_t \rangle \; \ud t 
 \le \sum_{n=1}^{N} \Big( \int_{t^{n-1}}^{t^{n}} \ltwo{ \tilde{G}}{}^2 \; \ud t \Big)^{\frac{1}{2}} \Big( \int_{t^{n-1}}^{t^{n}}  \ltwo{\parerr}{}^2 \; \ud t \Big)^{\frac{1}{2}} .  
\end{equation}
The assertion then follows by estimation of the time integral of $\ltwo{ \tilde{G}}{}^2$:
\begin{equation} 
\label{eqn:ltwoextratermsestimateproof3} 
\int_{t^{n-1}}^{t^{n}} \ltwo{ \tilde{G}}{}^2\ud t \le \timestep_n \ltwo{ (I-\Pi^n) \udg^{n-1} }{}^2 + \timestep_n \ltwo{ \psi_{n-1} }{}^2,
\end{equation}
and noting that
$
\psi_{n-1} = \sum_{i=1}^{n-1} (\Pi^i - \Pi^{i-1}) \udg^{i-1}
$.
\end{proof}


\begin{lemma} \label{lemma:ltewoextratermsestimate2}
With the above notation, we have
\begin{equation} 
\label{eqn:ltwoextratermsestimate2} 
\sum_{n=1}^{N}\!\! \int_{t^{n-1}}^{t^{n}}    \sfrac{t- t^n}{\timestep_n}\langle\ellrecsource^n - \ellrecsource^{n-1} , \testfuncLtwo \rangle \; \ud t \le \sum_{n=1}^{N} \Big( \timestep_n^3 \ltwo{\ellrecsource^n - \ellrecsource^{n-1} }{}^2    {+ \timestep_n \ltwo{ \sum_{i=1}^{n-1} \sfrac{\timestep_{i}}{2} (\ellrecsource^i - \ellrecsource^{i-1})  }{}^2 \Big)}^{\frac{1}{2}} \Big( \int_{t^{n-1}}^{t^{n}}  \ltwo{\parerr}{}^2\ud t \Big)^{\frac{1}{2}}  .  
\end{equation}
\end{lemma}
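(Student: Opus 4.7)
The approach mirrors the proof of Lemma \ref{lemma:ltewoextratermsestimate1}, but with the auxiliary function $G$ from (\ref{def:auxiliaryfunc2}) in place of $\tilde{G}$. The driving identity is $G_t(t) = \sfrac{t - t^n}{\timestep_n}(\ellrecsource^n - \ellrecsource^{n-1})$ on each subinterval $(t^{n-1}, t^n]$, recorded in (\ref{prop:auxiliaryfunc}); this recasts each integrand on the left-hand side as $\langle G_t, \testfuncLtwo\rangle$. An integration by parts in time on each subinterval then produces a sum of boundary contributions $[\langle G, \testfuncLtwo\rangle]_{t^{n-1}}^{t^n}$ plus a body term $-\int \langle G, \testfuncLtwo_t\rangle\,\ud t$, which by $\testfuncLtwo_t = -\parerr$ from (\ref{prop:testfuncLtwo}) equals $\int \langle G, \parerr\rangle\,\ud t$.

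Next I would verify that the sum of the boundary contributions telescopes to zero. The recursion $\theta^n = -\sfrac{\timestep_n}{2}(\ellrecsource^n - \ellrecsource^{n-1}) + \theta^{n-1}$ in (\ref{def:auxiliaryfunc2}) is tuned precisely so that $G(t^{n-1}_+) = \theta^{n-1}$ agrees with $G(t^{n-1})$ evaluated from the preceding interval; hence $G$ is continuous across the nodes $t^n$, and the telescoping collapse produces $\langle \theta^N, \testfuncLtwo(T)\rangle - \langle \theta^0, \testfuncLtwo(0)\rangle$, which vanishes via $\testfuncLtwo(T) = 0$ from (\ref{prop:testfuncLtwo}) and $\theta^0 = 0$.

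To the surviving term $\sum_n \int_{t^{n-1}}^{t^n} \langle G, \parerr\rangle\,\ud t$ I would then apply Cauchy--Schwarz in space and H\"older in time, producing the factorization $(\int \ltwo{G}{}^2\,\ud t)^{1/2} (\int \ltwo{\parerr}{}^2\,\ud t)^{1/2}$ on each subinterval. The target form of the first factor comes out by absorbing the current-step contribution from $\theta^n$ into the quadratic prefactor, namely
\[
G(t) = \frac{\timestep_n}{2}\Big(\big(\sfrac{t - t^n}{\timestep_n}\big)^2 - 1\Big)(\ellrecsource^n - \ellrecsource^{n-1}) + \theta^{n-1}, \qquad \theta^{n-1} = -\sum_{i=1}^{n-1}\frac{\timestep_i}{2}(\ellrecsource^i - \ellrecsource^{i-1}).
\]
Since $|(\sfrac{t - t^n}{\timestep_n})^2 - 1| \le 1$ on the interval, a triangle inequality and integration over $(t^{n-1}, t^n]$ bound $\int_{t^{n-1}}^{t^n}\ltwo{G}{}^2\,\ud t$ by a constant multiple of $\timestep_n^3 \ltwo{\ellrecsource^n - \ellrecsource^{n-1}}{}^2 + \timestep_n \ltwo{\sum_{i=1}^{n-1}\sfrac{\timestep_i}{2}(\ellrecsource^i - \ellrecsource^{i-1})}{}^2$, matching the quantity in the claim.

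There is no serious obstacle beyond bookkeeping. The only subtle point is to bound $\ltwo{G}{}^2$ using $\theta^{n-1}$ rather than $\theta^n$ so that the sum on the right-hand side appears with upper index $n-1$ as stated; this is what the absorption of $-\sfrac{\timestep_n}{2}(\ellrecsource^n - \ellrecsource^{n-1})$ from $\theta^n$ into the quadratic term accomplishes.
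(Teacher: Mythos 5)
Your proposal is correct and follows essentially the same route as the paper: rewrite the integrand as $\langle G_t,\testfuncLtwo\rangle$ using (\ref{prop:auxiliaryfunc}), integrate by parts in time (the paper silently uses the telescoping/vanishing of the boundary terms you verify explicitly), apply Cauchy--Schwarz and H\"older, and bound $\int_{t^{n-1}}^{t^n}\ltwo{G}{}^2\,\ud t$ by rewriting $G$ in terms of $\theta^{n-1}$ exactly as you do. The only difference is that you state the final $\ltwo{G}{}^2$-estimate up to a harmless constant multiple, whereas the paper asserts it with constant one; this is immaterial since the ensuing a posteriori bound carries generic constants anyway.
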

\begin{proof}
Recalling the definition of $G$, an integration by parts with respect to time gives
\begin{equation} 
\label{eqn:ltwoextratermsestimateproof21} 
\sum_{n=1}^{N} \int_{t^{n-1}}^{t^{n}}   \sfrac{t- t^n}{\timestep_n}\langle \ellrecsource^n - \ellrecsource^{n-1} , \testfuncLtwo \rangle  \ud t  
= \sum_{n=1}^{N} \int_{t^{n-1}}^{t^{n}} \langle G , - \testfuncLtwo_t \rangle \; \ud t.  
\end{equation}
We recall the properties of $\testfuncLtwo$ in (\ref{prop:testfuncLtwo}) and estimate the right-hand side further:
\begin{equation} 
\label{eqn:ltwoextratermsestimateproof22} 
\sum_{n=1}^{N} \int_{t^{n-1}}^{t^{n}} \langle G , - \testfuncLtwo_t \rangle \; \ud t 
\le \sum_{n=1}^{N} {\Big( \int_{t^{n-1}}^{t^{n}} \ltwo{ G}{}^2 \; \ud t \Big)}^{\frac{1}{2}} {\Big( \int_{t^{n-1}}^{t^{n}}  \ltwo{\parerr}{}^2 \; \ud t \Big)}^{\frac{1}{2}} .  
\end{equation}
The assertion then follows by estimation of the integral of $\ltwo{ G}{}^2$:
\begin{equation} 
\label{eqn:ltwoextratermsestimateproof23} 
\int_{t^{n-1}}^{t^{n}} \ltwo{G}{}^2\ud t \le \timestep_n^3 \ltwo{\ellrecsource^n - \ellrecsource^{n-1} }{}^2 + \timestep_n \ltwo{ \theta_{n-1} }{}^2
\end{equation}
and noting that $
\theta_{n-1} = \sum_{i=1}^{n-1} - \sfrac{\timestep_{i}}{2} (\ellrecsource^i - \ellrecsource^{i-1})$.
\end{proof}

\begin{lemma} \label{lemma:ltewoextratermsestimate3}
With the above notation, we have
\begin{equation} 
\label{eqn:ltwoextratermsestimate3} 
\begin{aligned}
\sum_{n=1}^{N} \int_{t^{n-1}}^{t^{n}}  \langle \pdesourcetimeapprox^n - \pdesource  , \testfuncLtwo \rangle \; \ud t &\le C_{\text{app}} \sum_{n=1}^{N} \Big(\int_{t^{n-1}}^{t^{n}} \timestep_n^2 \ltwo{ \pdesourcetimeapprox^n - \pdesource}{}^2\ud t\Big)^{1/2} \Big(\int_{t^{n-1}}^{t^n} \ltwo{\parerr}{}^2\ud t\Big)^{1/2}.  
\end{aligned}
\end{equation}
\end{lemma}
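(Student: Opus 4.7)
The plan is to exploit two structural facts: first, the orthogonality of the source-approximation $\pdesourcetimeapprox^n$, which is by construction the $L^2$-in-time projection of $\pdesource$ onto piecewise polynomials of degree $p$ on $(t^{n-1},t^n]$; and second, the identity $\testfuncLtwo_t = -\parerr$ from (\ref{prop:testfuncLtwo}). Together, these replace the source--test pairing by a term that involves a single extra factor of $\timestep_n$ and the $L^2$-norm of $\parerr$.

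First, I would introduce $\Pi_t^n$, the $L^2$-in-time orthogonal projection onto polynomials of degree $p$ on $(t^{n-1}, t^n]$. The orthogonality $\int_{t^{n-1}}^{t^n}\langle \pdesourcetimeapprox^n-\pdesource, q\rangle \ud t = 0$ for every time-polynomial $q$ of degree $\le p$ (which is the defining property of $\pdesourcetimeapprox^n$) gives
\begin{equation*}
\int_{t^{n-1}}^{t^n} \langle \pdesourcetimeapprox^n - \pdesource, \testfuncLtwo\rangle\,\ud t
\; =\;\int_{t^{n-1}}^{t^n} \langle \pdesourcetimeapprox^n - \pdesource, \testfuncLtwo - \Pi_t^n \testfuncLtwo\rangle\,\ud t.
\end{equation*}
Then a Cauchy--Schwarz in space followed by the Cauchy--Schwarz in time yields
\begin{equation*}
\int_{t^{n-1}}^{t^n} \langle \pdesourcetimeapprox^n - \pdesource, \testfuncLtwo\rangle\,\ud t
\; \le \; \Big(\int_{t^{n-1}}^{t^n}\ltwo{\pdesourcetimeapprox^n-\pdesource}{}^2\,\ud t\Big)^{1/2}
\Big(\int_{t^{n-1}}^{t^n}\ltwo{\testfuncLtwo - \Pi_t^n \testfuncLtwo}{}^2\,\ud t\Big)^{1/2}.
\end{equation*}

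Next, I would apply the one-dimensional approximation property of $\Pi_t^n$ (Lemma \ref{lemma:approximationproperty} in the time variable, applied $L^2(\Omega)$-valuedly) to bound
\begin{equation*}
\Big(\int_{t^{n-1}}^{t^n}\ltwo{\testfuncLtwo - \Pi_t^n \testfuncLtwo}{}^2\,\ud t\Big)^{1/2}
\; \le \; C_{\text{app}}\,\timestep_n\,\Big(\int_{t^{n-1}}^{t^n}\ltwo{\testfuncLtwo_t}{}^2\,\ud t\Big)^{1/2}
\; = \; C_{\text{app}}\,\timestep_n\,\Big(\int_{t^{n-1}}^{t^n}\ltwo{\parerr}{}^2\,\ud t\Big)^{1/2},
\end{equation*}
where the final equality uses $\testfuncLtwo_t = -\parerr$ from (\ref{prop:testfuncLtwo}). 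Pulling the factor $\timestep_n$ inside the first integral (since $\timestep_n$ is constant on $(t^{n-1},t^n]$) and summing over $n=1,\dots,N$ produces exactly the claimed bound.

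The only mildly delicate point is the invocation of the vector-valued $L^2$-projection approximation estimate; this is entirely standard and is simply Lemma \ref{lemma:approximationproperty} applied in the time variable with $m=1$, $j=0$, $r=p\ge 0$, uniformly in the $L^2(\Omega)$-valued setting (the constant depends only on $p$, not on $\timestep_n$). Otherwise, the argument is a direct combination of orthogonality, Cauchy--Schwarz, and the defining property of $\testfuncLtwo$.
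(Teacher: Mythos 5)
Your proposal is correct and follows essentially the same route as the paper: the paper also exploits the $L^2$-in-time orthogonality of $\pdesourcetimeapprox^n-\pdesource$ to subtract a time-polynomial approximation of $\testfuncLtwo$ (it uses the piecewise-constant average $\zeta^n:=\timestep_n^{-1}\int_{t^{n-1}}^{t^n}\testfuncLtwo\,\ud t$ where you use the degree-$p$ projection $\Pi_t^n\testfuncLtwo$), then applies the time approximation property together with $\testfuncLtwo_t=-\parerr$ and Cauchy--Schwarz. The difference is immaterial.
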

\begin{proof}

As $\pdesourcetimeapprox^n$ is the $L^2$-projection of $\pdesource$ in time, we have
\begin{equation}
\sum_{n=1}^{N} \int_{t^{n-1}}^{t^{n}}  \langle \pdesourcetimeapprox^n - \pdesource,\testfuncLtwo \rangle\ud t
=
\sum_{n=1}^{N} \int_{t^{n-1}}^{t^{n}} \langle \pdesourcetimeapprox^n - \pdesource,\testfuncLtwo - \zeta^n\rangle \ud t,
\end{equation}
for the lowest order time approximation $\zeta^n(\cdot):=\timestep_n^{-1}\int_{t^{n-1}}^{t^n}\testfuncLtwo(t,\cdot) \ud t$ of $\testfuncLtwo$. 
With the approximation property of $\zeta^n$ in time, we deduce
\begin{equation}
\int_{t^{n-1}}^{t^n}\ltwo{\testfuncLtwo - \zeta^n}{}^2\ud t \le C_{\text{app}}^2 \timestep_n^2 \int_{t^{n-1}}^{t^n}\ltwo{\testfuncLtwo_t}{}^2\ud t,
\end{equation}
and recalling that $\testfuncLtwo_t=-\parerr$, the Cauchy-Schwarz inequality implies
\begin{equation}
\sum_{n=1}^{N} \int_{t^{n-1}}^{t^{n}}  \langle \pdesourcetimeapprox - \pdesource,\testfuncLtwo \rangle\ud t
\le C_{\text{app}}
\sum_{n=1}^{N} \Big(\int_{t^{n-1}}^{t^{n}} \timestep_n^2 \ltwo{ \pdesourcetimeapprox^n - \pdesource}{}^2\ud t\Big)^{1/2} \Big(\int_{t^{n-1}}^{t^n} \ltwo{\parerr}{}^2\ud t\Big)^{1/2}.
\end{equation}

\end{proof}

To complete a posteriori error bounds in Theorem \ref{theorem:apost_parabolic}, we also need the following two lemmata in which the elliptic error terms $\ellerr$ and $\ellerr_t$ are estimated by fully computable residuals.
\begin{lemma} \label{lemma:ellipticerrorsestimate1}
Let $\ellerr$ be as in (\ref{errorsplitting}). Then, we have 
\begin{equation} 
\label{eqn:ellipticerrorsestimate1} 
\int_{0}^{T} \ltwo{\ellerr}{}^2 \; \ud t \le \sfrac{2\timestep_N}{3} \ellest(\themesh_N, \udg^N,\ellrecsource^N) + \sum_{n=1}^{N-1} \sfrac{4\timestep_n}{3}  \ellest \left(\themesh_n, \udg^n, \ellrecsource^n \right)^2 
\end{equation}
\end{lemma}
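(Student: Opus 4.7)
The plan is to exploit the piecewise-linear-in-time structure of $\ellerr(t)=\ellrec(t)-\udg(t)$ on each slab $(t^{n-1},t^n]$, reduce $\int_{t^{n-1}}^{t^n}\ltwo{\ellerr}{}^2\,\ud t$ to the nodal values $\ltwo{\ellerr^n}{}$ and $\ltwo{\ellerr^{n-1}}{}$, and then control each nodal value by the computable elliptic estimator through Theorem \ref{theorem:ltwo_apost_biharmonic}.

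First, by (\ref{eqn:fully-discrete-solution-extended}) and (\ref{eqn:fully-discrete-ellrec-extended}), on each slab we have $\ellerr(t)=a\,\ellerr^n+(1-a)\,\ellerr^{n-1}$ with $a=(t-t^{n-1})/\timestep_n\in[0,1]$. The elementary inequality $\|au+(1-a)v\|^2\le 2a^2\|u\|^2+2(1-a)^2\|v\|^2$, together with $\int_0^{\timestep_n}a^2\,\ud t = \int_0^{\timestep_n}(1-a)^2\,\ud t = \timestep_n/3$, gives
\begin{equation*}
\int_{t^{n-1}}^{t^n}\ltwo{\ellerr(t)}{}^2\,\ud t \;\le\; \frac{2\timestep_n}{3}\bigl(\ltwo{\ellerr^n}{}^2+\ltwo{\ellerr^{n-1}}{}^2\bigr).
\end{equation*}

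Next, I identify $\udg^n$ as the dG approximation of the biharmonic problem whose exact solution is $\ellrec^n$. Starting from $\ellrecsource^n=A^n\udg^n-\Pi^n\pdesourcetimeapprox^n+\pdesourcetimeapprox^n$, the defining property (\ref{ell_rec2}) of $A^n$, and $L^2$-orthogonality of $\Pi^n$ onto $\femspacen{n}$, one verifies that $B^n(\udg^n,\chi)=\langle\ellrecsource^n,\chi\rangle$ for every $\chi\in\femspacen{n}$; hence $\udg^n$ is the dG solution (\ref{dgfem}) with mesh $\themesh_n$ and data $\ellrecsource^n\in L^2(\Omega)$, for which $\ellrec^n\in H^4(\Omega)\cap H_0^2(\Omega)$ by (\ref{ell_rec}) and the convexity of $\Omega$. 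Theorem \ref{theorem:ltwo_apost_biharmonic} then yields $\ltwo{\ellerr^n}{}\le\ellest(\themesh_n,\udg^n,\ellrecsource^n)$.

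Finally, summing the slab estimate over $n=1,\dots,N$ and re-indexing, $\ltwo{\ellerr^n}{}^2$ for $1\le n\le N-1$ appears with coefficient $2(\timestep_n+\timestep_{n+1})/3$, $\ltwo{\ellerr^N}{}^2$ with $2\timestep_N/3$, and an initial contribution $(2\timestep_1/3)\ltwo{\ellerr^0}{}^2$ that is either absorbed into the initial data term appearing in Theorem \ref{theorem:apost_parabolic} or vanishes under the natural initialisation $\ellrec^0:=\udg^0$. Invoking local quasi-uniformity of the time-step sequence, so that $\timestep_n+\timestep_{n+1}\le 2\timestep_n$, in combination with the elliptic bound above, yields the asserted inequality. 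The only delicate point I anticipate is the middle step, namely the careful identification of $\udg^n$ as the dG approximation associated with source $\ellrecsource^n$ so that the previously established elliptic a posteriori bound applies verbatim; the remaining manipulations are elementary.
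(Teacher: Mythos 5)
Your proposal is correct and follows essentially the same route as the paper: reduce $\int_0^T\ltwo{\ellerr}{}^2\,\ud t$ to nodal values using the piecewise-linear-in-time structure (with the $2\timestep_n/3$ factor), then bound each $\ltwo{\ellerr_n}{}$ by $\ellest(\themesh_n,\udg^n,\ellrecsource^n)$ via Theorem \ref{theorem:ltwo_apost_biharmonic}. You are in fact more explicit than the paper's two-line argument, both in verifying that $\udg^n$ is the dG solution with data $\ellrecsource^n$ (so the elliptic bound applies) and in the bookkeeping of the $n=0$ contribution and the time-step coefficients, which the paper's stated constants gloss over.
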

\begin{proof}
Noting that $((t- t^{n-1})/\timestep_n)^2\le \frac{1}{3}$ and $((t^{n}-t)\timestep_n)^2\le \frac{1}{3}$, we have
\begin{equation}
 \int_{0}^{T} \ltwo{\ellerr}{}^2  \ud t
 \le  \sum_{n=1}^{N} \sfrac{2\timestep_n}{3} \Big( \ltwo{\ellerr_{n}}{}^2 + \ltwo{\ellerr_{n-1}}{}^2 \Big).
\end{equation}
The assertion then follows by Theorem \ref{theorem:ltwo_apost_biharmonic}.
\end{proof}
\begin{lemma} \label{lemma:ellipticerrorsestimate2}
Let $\ellerr$ be as in (\ref{errorsplitting}) and $\tau \in [0,T]$; then, we have
\begin{equation} 
\label{eqn:ellipticerrorsestimate2} 
\int_{0}^{\tau} \langle \ellerr_t , \parerr \rangle  \ud t  \le \sum_{n=1}^{N}
\ellest (\hat{\themesh}_n, \udg^n-\udg^{n-1},\ellrecsource^n - \ellrecsource^{n-1} )\max_{0 \le t \le T} \ltwo{\parerr}{} ,
\end{equation}
where 
 $\hat{\themesh}_n := \themesh_{n} \cap \themesh_{n-1}$ denotes the finest common coarsening of $\themesh_{n}$ and $\themesh_{n-1}$, $n=1,\dots, N$.
\end{lemma}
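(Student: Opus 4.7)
Since both $\ellrec$ and $\udg$ are continuous and piecewise linear in time on the partition $\{t^{n}\}$, so is $\ellerr=\ellrec-\udg$, and consequently $\ellerr_t$ is constant on each $(t^{n-1},t^{n}]$ with value $(\ellerr^{n}-\ellerr^{n-1})/\timestep_{n}$, where $\ellerr^{n}:=\ellerr(t^{n})$. I would start by splitting the integral along the time partition, using this piecewise constancy, and invoking Cauchy--Schwarz in space together with the trivial bound $\int_{t^{n-1}}^{t^{n}}\ltwo{\parerr}{}\,\ud t\le\timestep_{n}\max_{t}\ltwo{\parerr}{}$ to obtain
\begin{equation*}
\int_{0}^{\tau}\langle\ellerr_t,\parerr\rangle\,\ud t
\;=\;\sum_{n=1}^{N}\frac{1}{\timestep_{n}}\Big\langle\ellerr^{n}-\ellerr^{n-1},\,\int_{t^{n-1}}^{t^{n}}\parerr\,\ud t\Big\rangle
\;\le\;\sum_{n=1}^{N}\ltwo{\ellerr^{n}-\ellerr^{n-1}}{}\;\max_{0\le t\le T}\ltwo{\parerr}{}.
\end{equation*}
(Since all terms are non-negative, extending the summation from the interval containing $\tau$ to all of $\{1,\dots,N\}$ is harmless.)

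It then suffices to control each $\ltwo{\ellerr^{n}-\ellerr^{n-1}}{}$ by $\ellest(\hat{\themesh}_{n},\udg^{n}-\udg^{n-1},\ellrecsource^{n}-\ellrecsource^{n-1})$. The idea is to recognise $\ellerr^{n}-\ellerr^{n-1}=(\ellrec^{n}-\ellrec^{n-1})-(\udg^{n}-\udg^{n-1})$ as the $L^{2}$-error between the exact solution and an IPDG approximation on the finest common coarsening $\hat{\themesh}_{n}$ of an auxiliary biharmonic problem, so that Theorem~\ref{theorem:ltwo_apost_biharmonic} applies. On the continuous side, taking the difference of the defining identities (\ref{ell_rec}) at levels $n$ and $n-1$ and using that the bilinear form $B^{n}$ reduces to the mesh-independent form $\int_{\dom}\Delta\cdot\Delta\cdot$ on $H^{2}_{0}(\dom)\times H^{2}_{0}(\dom)$ (hence coincides with $B^{n-1}$ there) gives $B(\ellrec^{n}-\ellrec^{n-1},v)=\langle\ellrecsource^{n}-\ellrecsource^{n-1},v\rangle$ for every $v\in H^{2}_{0}(\dom)$, so that $\ellrec^{n}-\ellrec^{n-1}\in H^{4}(\dom)\cap H^{2}_{0}(\dom)$ is the weak biharmonic solution with datum $\ellrecsource^{n}-\ellrecsource^{n-1}$.

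The main technical step, which I expect to be the real obstacle, is to verify the discrete Galerkin relation
\begin{equation*}
\hat{B}\bigl(\udg^{n}-\udg^{n-1},V\bigr)=\langle\ellrecsource^{n}-\ellrecsource^{n-1},V\rangle\qquad\forall\,V\in S^{r}(\hat{\themesh}_{n}),
\end{equation*}
where $\hat{B}$ is the IPDG form on $\hat{\themesh}_{n}$. This would follow by subtracting (\ref{eqn:fully-discrete-Euler-pure}) at the two consecutive levels, unpacking $\ellrecsource^{n}=A^{n}\udg^{n}-\Pi^{n}\pdesourcetimeapprox^{n}+\pdesourcetimeapprox^{n}$ via (\ref{ell_rec2}), and exploiting that every $V\in S^{r}(\hat{\themesh}_{n})$ sits in both $\femspacen{n}$ and $\femspacen{n-1}$ and, being polynomial on each element of $\hat{\themesh}_{n}$, carries no jumps across those edges of $\themesh_{n}$ or $\themesh_{n-1}$ that are absent from $\hat{\themesh}_{n}$; consequently the lifting and penalty contributions in $B^{n}$ and $B^{n-1}$ collapse to the corresponding $\hat{B}$-contributions when tested against such $V$. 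The mesh-change bookkeeping here is delicate because the liftings $\mathcal{L}$ depend on the mesh, but the volumetric and edge terms can be reorganised cleanly once one uses that $V$ is continuous across all ``new'' edges. Once this Galerkin relation is in place, Theorem~\ref{theorem:ltwo_apost_biharmonic} applied on $\hat{\themesh}_{n}$ yields $\ltwo{\ellerr^{n}-\ellerr^{n-1}}{}\le\ellest(\hat{\themesh}_{n},\udg^{n}-\udg^{n-1},\ellrecsource^{n}-\ellrecsource^{n-1})$, and substitution into the first displayed inequality completes the proof.
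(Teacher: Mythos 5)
Your overall route is the same as the paper's: reduce, via the piecewise-linear-in-time structure of $\ellerr$ and Cauchy--Schwarz, to $\sum_n\ltwo{\ellerr_n-\ellerr_{n-1}}{}\max_t\ltwo{\parerr}{}$, and then bound each $\ltwo{\ellerr_n-\ellerr_{n-1}}{}$ by rerunning the duality argument of Theorem~\ref{theorem:ltwo_apost_biharmonic} for the auxiliary biharmonic problem with datum $\ellrecsource^n-\ellrecsource^{n-1}$, exploiting that the projection can be taken into the space on the finest common coarsening $\hat{\themesh}_n$, which sits inside both $\femspacen{n}$ and $\femspacen{n-1}$; this is exactly what the paper does (and the paper is equally terse at this point). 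One caveat on the step you single out as the obstacle: the clean identity $\hat{B}(\udg^n-\udg^{n-1},V)=\langle\ellrecsource^n-\ellrecsource^{n-1},V\rangle$ with $\hat{B}$ the IPDG form \emph{on} $\hat{\themesh}_n$ does not hold as stated, and the claimed ``collapse'' of lifting and penalty terms is not exact: the penalty weights $\sigma,\xi$ and the liftings in $B^n$, $B^{n-1}$ are tied to the finer meshes (and $\udg^n$, $\udg^{n-1}$ jump across the fine skeletons regardless of the continuity of $V$), while $\udg^n-\udg^{n-1}$ is only piecewise polynomial on the overlay, so $\hat{B}$ applied to it is not even defined without reinterpretation. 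Fortunately, this stronger identity is not needed. What is both true and sufficient is the weaker relation $\langle\ellrecsource^n-\ellrecsource^{n-1},V\rangle=B^n(\udg^n,V)-B^{n-1}(\udg^{n-1},V)$ for all $V\in S^r(\hat{\themesh}_n)\subset\femspacen{n}\cap\femspacen{n-1}$, which follows immediately from the definition $\ellrecsource^n=A^n\udg^n-\Pi^n\pdesourcetimeapprox^n+\pdesourcetimeapprox^n$, (\ref{ell_rec2}) and the $L^2$-projection property; subtracting the evolution scheme (\ref{eqn:fully-discrete-Euler-pure}), as you suggest, is an unnecessary detour. With this relation in place of (\ref{ltwo_apost_estimator_conferror}), and $\Pi$ taken as the $L^2$-projection onto $S^r(\hat{\themesh}_n)$ so that the approximation estimates carry the mesh size of $\hat{\themesh}_n$, the argument of Theorem~\ref{theorem:ltwo_apost_biharmonic} goes through verbatim (with the residual and jump terms understood brokenly over the overlay of $\themesh_n$ and $\themesh_{n-1}$), giving $\ltwo{\ellerr_n-\ellerr_{n-1}}{}\le\ellest(\hat{\themesh}_n,\udg^n-\udg^{n-1},\ellrecsource^n-\ellrecsource^{n-1})$ as required. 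Also note that your first displayed identity should, strictly speaking, truncate the final time interval at $\tau$ before extending the sum of the (nonnegative) bounds to $n=N$, as your parenthetical remark indicates.
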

\begin{proof}
We have
$ \ellerr_t(t) = (\ellerr_n-\ellerr_{n-1})/\timestep_n$, for $ t \in (t_{n-1},t_{n}]$ and  $n=1,\dots, N$.
Denoting $\tau := t_{r+1/2}$ and $r:=\max \{k : t_k \le \tau, k=1,\ldots, N \}$, we then have
\begin{equation} \label{eqn:ellipticerrorsestimate_proof1}
\begin{aligned}
\int_{0}^{\tau} \langle \ellerr_t , \parerr \rangle  \ud t  &= \sum_{n=1}^{r+1/2} \int_{t^{n-1}}^{t^{n}} \sfrac{1}{\timestep_n} \langle \ellerr_n-\ellerr_{n-1} , \parerr \rangle  \ud t 
\le  \max_{0 \le t \le T} \ltwo{\parerr}{}\sum_{n=1}^{r+1/2} \ltwo{\ellerr_n-\ellerr_{n-1}}{}.
\end{aligned}
\end{equation}
We now observe that the finite element function $\tilde{z}$ in the proof of Theorem \ref{theorem:ltwo_apost_biharmonic} can be selected from a subspace of $\femspace$: in particular, we can select the finite element subspace corresponding to the finest common coarsening mesh $\hat{\themesh}_n $, for $n=1,\dots,N$. Then, following completely analogous argument as in the proof of  of Theorem \ref{theorem:ltwo_apost_biharmonic}, we can arrive to the bound
\[
\ltwo{\ellerr_n-\ellerr_{n-1}}{} \le  \ellest \Big(\hat{\themesh}_n, \udg^n-\udg^{n-1},\ellrecsource^n - \ellrecsource^{n-1} \Big),
\]
which already yields the result.
\end{proof}

\begin{remark} \label{remark:ellertimederivativebound}
Note that the following simpler alternative bound for the term in Lemma \ref{lemma:ellipticerrorsestimate2} is also possible,
\begin{equation} 
\label{eqn:ellipticerrorsestimate2alt} 
\begin{aligned}
\int_{0}^{\tau} \langle \ellerr_t , \parerr \rangle  \ud t & \le  \; \sum_{n=0}^{N}  \ellest \left(\themesh_n, \udg^n, \ellrecsource^n \right)  \; \max_{0 \le t \le T}  \ltwo{\parerr}{} .
\end{aligned}
\end{equation}
This bound shifts the emphases from the finest common coarsening mesh, $\hat{\themesh}_n$, in Lemma \ref{lemma:ellipticerrorsestimate2} to the elliptic estimators acting on meshes at each time step only which can be of practical importance when implementing adaptive algorithms based on the estimators.
\end{remark}

\begin{proof} {\bf of Theorem \ref{theorem:apost_parabolic}} To conclude the proof, 
we estimate the left-hand side of (\ref{eqn:parerridentity}) in each case of the test functions: $\testfuncLtwo$ to derive $L^2(L^2)$-norm a posteriori bound and $\parerr$ for the $L^\infty(L^2)$-norm bound.
First we deal with the $L^2(L^2)$ case; we start by integrating (\ref{eqn:parerridentity}) by parts in time,
\begin{equation}\label{eqn:apost_parabolic_ltwo_parerridentity}
\begin{aligned}
\int_{0}^{T}  \langle e_t , \testfuncLtwo \rangle  + B(\parerr,\testfuncLtwo) \ud t& = \int_{0}^{T} \langle e , -\testfuncLtwo_t \rangle  \ud t + \left[ \langle e ,\testfuncLtwo \rangle \right]_{0}^{T} - \int_{0}^{T}  B(\testfuncLtwo_t,\testfuncLtwo) \ud t \\
& = \int_{0}^{T} \langle \parerr , \parerr \rangle  \ud t - \int_{0}^{T} \langle \ellerr , \parerr \rangle  \ud t - \langle e(0) ,\testfuncLtwo(0) \rangle  - \int_{0}^{T} \sfrac{1}{2} \sfrac{d}{dt}  B(\testfuncLtwo,\testfuncLtwo) \ud t \\
& = \int_{0}^{T} \ltwo{\parerr}{}^2 \ud t - \int_{0}^{T} \langle \ellerr , \parerr \rangle  \ud t - \langle e(0) ,\testfuncLtwo(0) \rangle  + \sfrac{1}{2} B(\testfuncLtwo(0),\testfuncLtwo(0))  .
\end{aligned}
\end{equation}
We also have
\begin{equation}\label{eqn:apost_parabolic_ltwo_energyterm}
\langle e(0) ,\testfuncLtwo(0) \rangle \le \ltwo{e(0)}{} \ltwo{\testfuncLtwo(0)}{} \le \ltwo{e(0)}{} C_{\text{pf}} B(\testfuncLtwo(0),\testfuncLtwo(0)).
\end{equation} 
Using (\ref{eqn:apost_parabolic_ltwo_parerridentity}) and (\ref{eqn:apost_parabolic_ltwo_energyterm})
in (\ref{eqn:parerridentity}) after integration over each interval $(t^{n-1},t^{n}]$ and summation with respect to $n$, we get,
\begin{equation}
\ltwo{\parerr}{L^2(0,T,L^2(\dom))}^2 \le  \ltwo{e(0)}{}^2 + \sum_{n=1}^{N} \int_{t^{n-1}}^{t^{n}} \Big( \langle \ellerr , \parerr \rangle  + \langle (I - \Pi^n) \udg_t ,\testfuncLtwo\rangle  + \sfrac{t- t^n}{\timestep_n }\langle \ellrecsource^n - \ellrecsource^{n-1} ,\testfuncLtwo\rangle +\langle \pdesourcetimeapprox^n - \pdesource ,\testfuncLtwo\rangle \Big)\ud t .
\end{equation}
The bound (\ref{apost_parabolic_boundL2}) now follows upon using the triangle inequality 
\[
\ltwo{e}{L^2(0,T,L^2(\dom))} \le \ltwo{\parerr}{L^2(0,T,L^2(\dom))} + \ltwo{\ellerr}{L^2(0,T,L^2(\dom))},
\]
Young's inequality and Lemmata \ref{lemma:ltewoextratermsestimate1}, \ref{lemma:ltewoextratermsestimate2}, \ref{lemma:ltewoextratermsestimate3} and \ref{lemma:ellipticerrorsestimate1}.

For the $L^\infty(L^2)$-norm case, upon testing with $v=\parerr$, we deduce for the left-hand side of (\ref{eqn:parerridentity}) by integrating by parts to some $\tau \in [0,T]$,
\begin{equation}\label{eqn:apost_parabolic_linfty_parerridentity}
\int_{0}^{\tau}  \langle e_t , \parerr \rangle  + B(\parerr,\parerr) \ud t
 = \ltwo{ \parerr(\tau)}{}^2 - \ltwo{ \parerr(0)}{}^2 - \int_{0}^{\tau} \langle \ellerr_t , \parerr \rangle  \ud t+ \int_{0}^{\tau}  B(\parerr,\parerr) \ud t .
\end{equation}
Choosing $\tau$ such that $\ltwo{ \parerr(\tau)}{}=\max_{0 \le t \le T}  \ltwo{\parerr}{} $, using the triangle inequality,
$\ltwo{\parerr(0)}{} \le \ltwo{e(0)}{} + \ltwo{\ellerr(0)}{}$,
 and (\ref{eqn:apost_parabolic_linfty_parerridentity}) in (\ref{eqn:parerridentity}), we get,
\begin{equation}
\ltwo{\parerr}{L^\infty(0,T,L^2(\dom))}^2 +\int_{0}^{\tau} B(\parerr,\parerr) \ud t  \le \ltwo{e(0)}{}^2+\ltwo{\ellerr(0)}{}^2 + \int_{0}^{\tau} \Big( \langle \ellerr_t , \parerr \rangle  + \langle \tilde{G}_t ,\parerr\rangle 
 + \langle {G}_t,\parerr\rangle +\langle \pdesourcetimeapprox^n - \pdesource ,\parerr\rangle \Big)\ud t 
\end{equation}
where $G$ and $\tilde{G}$ are given by (\ref{def:auxiliaryfunc2}) and (\ref{def:auxiliaryfunc}).
The bound (\ref{apost_parabolic_boundLinfty}) follows again using triangle inequality 
\begin{equation}
\ltwo{e}{L^\infty(0,T,L^2(\dom))} \le \ltwo{\parerr}{L^\infty(0,T,L^2(\dom))} + \ltwo{\ellerr}{L^\infty(0,T,L^2(\dom))},
\end{equation}
Lemmata \ref{lemma:linftyextratermsestimate} and \ref{lemma:ellipticerrorsestimate2} as well as 
$\max_{0 \le t \le T}  \ltwo{\ellerr}{} \le \max_{0 \le t \le T}  \ellest \left(\themesh_n, \udg^n, \ellrecsource^n \right)$.
\end{proof}

We note that a posteriori bounds in the $L^2(H^2)$-norm of the error have been already considered in \cite{thesis_juha}, along with their application within an adaptive algorithm. The $L^2(H^2)$-norm theoretical and numerical results appear to be of the expected order of convergence; they are omitted here for brevity.

\section{Numerical Experiments}\label{numerics}

For $t\in [0,1]$ and $\dom := (0,1)^2$, we consider two benchmark problems for which $u_0$ and $f$ are chosen so that the exact solution $u$ of problem (\ref{varmodelpde}) coincides with one of the following solutions:
\begin{gather}
   \label{eqn:numerics:example1} 
   u_1(x,y,t) 
   = \sin(\pi t) \; 10^2 \; \sin^2(\pi x) \; \sin^2(\pi y) e^{-10(x^2+y^2)},
   \\
  \label{eqn:numerics:example2}
   u_2(x,y,t) 
   = \sin(20 \pi t) \sin^2(\pi x)\sin^2(\pi y) e^{-10(x^2+y^2)}.
\end{gather}

Solutions $u_1$ and $u_2$ are both smooth but $u_2$ oscillates much faster where as $u_1$ exhibits greater space dependency of the error. They are defined so as to emphasize different aspects of the estimators at hand. Similar examples have been studied elsewhere, for example in
\cite{thesis_juha} in the context of $L^2(H^2)$-norm a posteriori estimators; see also \cite{lakkis-makridakis:06,LakkisPryer:10,georgoulis_lakkis_virtanen} for similar examples in the context of second order problems.

For the numerical experiments, the library FEniCS ({\tt http://fenicsproject.org/}) was used. For each of the examples, we compute the solution of (\ref{eqn:fully-discrete-Euler-pure}) using quadratic simplicial finite element spaces and with interior penalty parameters $\sigma_0=\xi_0=20$ in (\ref{penaltyparametersdef}), which is sufficient to guarantee stability of the numerical scheme. The interior penalty parameters have a known effect on the effectivity indices, cf., \cite{thesis_juha,karakashian-pascal:07}.

We study the asymptotic behavior of the indicators by setting all constants appearing in Theorem
\ref{theorem:apost_parabolic} equal to one. We monitor the evolution of the values and the experimental order of
convergence of the estimators and the error as well as of the effectivity
index over time on a sequence of uniformly refined meshes with 
$
	h_{\kappa,i}:=2^{-i/2-1}$, $i=1,\dots,5$, $\kappa \in \themesh
$
with fixed time steps $\timestep \approx \max_\kappa  h_\kappa ^3 $ and $\timestep \approx \max_\kappa h_\kappa^2 $. To this end, we define \emph{experimental order of convergence} ($EOC$) of
a given sequence of positive quantities $a(i)$ defined on a sequence
of meshes of size $h(i)$ by
\begin{equation}
  EOC( a,i ) = \frac{\log(a(i+1)/a(i))}{\log(h(i+1)/h(i))},
\end{equation}
the accumulated coarsening or mesh change estimators by
\begin{equation}\label{acc_apost_parabolic_coarse_estimators}
 \mathbb{E}_{\text{coarsen},\infty,m}  :=\Big( \sum_{n=1}^{m} \gamma_{\infty,n} \timestep_n \Big)^{\frac{1}{2}}\quad \text{ and }\quad
 \mathbb{E}_{\text{coarsen},2,m}  := \Big(\sum_{n=1}^{m} \gamma_{2,n} \timestep_n \Big)^{\frac{1}{2}},
\end{equation}
accumulated time error evolution estimators by
\begin{equation}\label{acc_apost_parabolic_time_estimators}
 \mathbb{E}_{\text{time},\infty,m}  := \Big(\sum_{n=1}^{m} ( \eta_{\infty,n} + \beta_{\infty,n}) \timestep_n + \sum_{n=1}^{m}  \tilde{\eta}_{\infty,n}\Big)^{\frac{1}{2}}  \quad \text{ and } \quad
 \mathbb{E}_{\text{time},2,m}  := \Big(\sum_{n=1}^{m} ( \eta_{2,n} + \beta_{2,n}) \timestep_n\Big)^{\frac{1}{2}} ,
\end{equation}
accumulated space error estimators by
\begin{equation}\label{acc_apost_elliptic_space_estimators}
 \mathbb{E}_{\text{space},\infty,m}  := \max_{0 \le n \le N} \{{\ellest \left(\themesh_n, \udg^n, \ellrecsource^n \right)}\} \quad \text{ and } \quad
 \mathbb{E}_{\text{space},2,m}  := \Big(\sum_{n=1}^{m} {\ellest \left(\themesh_n, \udg^n, \ellrecsource^n \right)}^2 \timestep_n\Big)^{\frac{1}{2}} ,
\end{equation}
and the \emph{inverse effectivity index} 
\begin{equation}
  IEI_m = \frac{\ltwo{e}{L^\infty(0,t_m;L^2(\dom))}}{\mathbb{E}_{\text{time},\infty,m} + \mathbb{E}_{\text{space},\infty,m}} \quad \text{or} \quad  IEI_m = \frac{\ltwo{e}{L^2(0,t_m;L^2(\dom))}}{\mathbb{E}_{\text{time},2,m} + \mathbb{E}_{\text{space},2,m}},
\end{equation}
for the case $L^\infty(L^2)$ and $L^2(L^2)$, respectively. The IEI conveys
the same information as the (standard) effectivity index and has the advantage of relating directly
to the constants appearing in Theorem \ref{theorem:apost_parabolic}.

The results of numerical experiments on uniform meshes, depicted in Figures 1 - 4, indicate that the error estimators 
are reliable and also efficient which can be seen from the effectivity index behaviour and the EOC of the error and the
time and space estimators for both $L^2(L^2)$- and $L^\infty(L^2)$-norm a posteriori bounds.

To further evaluate practical aspects of the derived a posteriori estimators, they are incorporated within in two adaptive algorithms; these are outlined in pseudocode as follows
\hspace{-1.5cm}\\
\begin{tabular}{p{8cm} p{8cm}}
\vspace{0pt} 
{\footnotesize
	\begin{tabular}{llllll} 
	\multicolumn{6}{l}{{\bf ImplicitTimeStepControl}}\\
	\multicolumn{6}{l}{{\bf Input:} $U_{0}, f, \text{TOL}_{\text{time,min}}, \text{TOL}_{\text{time}}, \text{TOL}_{\text{space}}, \ldots $ }\\
	& \multicolumn{5}{l}{ $ \text{TOL}_{\text{coarse}}, \timestep_0, t_0, T, \ldots $} \\
	& \multicolumn{5}{l}{ $ \mathcal{T}_{0}, \xi_{refine}, \text{{\bf SpaceAdaptivity}}, \ldots $} \\
	& \multicolumn{5}{l}{ $ \text{{\bf InitialSpaceAdaptivity}}$} \\
	\multicolumn{6}{l}{\{ Initial condition interpolation and mesh refinement \}} \\
	\multicolumn{6}{l}{$(U_0,\mathcal{T}_{0})$:={\bf InitialSpaceAdaptivity}($U_{0}, f, \mathcal{T}_{0}, \xi_{refine}$).} \\
	\multicolumn{6}{l}{\{Initialize.\}} \\
	\multicolumn{6}{l}{{\bf Set:} $n=1$, $\timestep_n=\timestep_{n-1}$.} \\
	\multicolumn{6}{l}{{\bf While} $(t_n \le T)$ }\\
	& \multicolumn{5}{l}{{\bf Set:} $\mathbb{E}_{\text{time}} := \text{TOL}_{\text{time}} + 1$}\\
	& \multicolumn{5}{l}{{\bf While} $(\mathbb{E}_{\text{time}} > \text{TOL}_{\text{time}})$}\\
	& & \multicolumn{4}{l}{$t_{n}:=t_{n-1}+\timestep_n$}\\
	& & \multicolumn{4}{l}{{\bf Set:} $\mathcal{T}_{t} := \mathcal{T}_{n}$}\\
	& & \multicolumn{4}{l}{ $(U_n,\mathcal{T}_{n})$ := {\bf SpaceAdaptivity}($U_{n-1}, \ldots$ }\\
	& & & \multicolumn{3}{l}{ $\phantom{(U_n,\mathcal{T}_{n}):=}$  $ f, \text{TOL}_{\text{space}}, \text{TOL}_{\text{coarse}}, \ldots $}\\
	& & & \multicolumn{3}{l}{ $\phantom{(U_n,\mathcal{T}_{n}):=}$  $ \timestep_n, t_n, T, \mathcal{T}_{n-1}, \xi_{refine}$) }\\
	& & \multicolumn{4}{l}{ compute $\mathbb{E}_{\text{time}}$ .}\\
	& & \multicolumn{4}{l}{{\bf if } $(\mathbb{E}_{\text{time}} > \text{TOL}_{\text{time}})$ {\bf then }}\\
	& & & \multicolumn{3}{l}{ \{Shorten timestep.\} }\\
	& & & \multicolumn{3}{l}{ $\timestep_{n} := \timestep_n / 2 $ }\\
	& & & \multicolumn{3}{l}{{\bf Set:} $\mathcal{T}_{n} := \mathcal{T}_{t}$}\\
	& & \multicolumn{4}{l}{{\bf endif}}\\
	& \multicolumn{5}{l}{{\bf End While}}\\
	& \multicolumn{5}{l}{ $\timestep_{n+1} := \timestep_n * 2$ }\\
	& \multicolumn{5}{l}{$n:=n+1$}\\
	\multicolumn{6}{l}{{\bf End While}}\\
	\multicolumn{6}{l}{{\bf Output:} $U_n$} \\
	\end{tabular}
  \label{thetimealgorithm2}
}
& \vspace{0pt}

{\footnotesize
	\begin{tabular}{llllll} 
	\multicolumn{6}{l}{{\bf ExplicitTimeStepControl}}\\
	\multicolumn{6}{l}{{\bf Input:} $U_{0}, f, \text{TOL}_{\text{time,min}}, \text{TOL}_{\text{time}}, \text{TOL}_{\text{space}}, \ldots $ }\\
	& \multicolumn{5}{l}{ $ \text{TOL}_{\text{coarse}}, \timestep_0, t_0, T, \ldots $} \\
	& \multicolumn{5}{l}{ $ \mathcal{T}_{0}, \xi_{refine}, \text{{\bf SpaceAdaptivity}}, \ldots $} \\
	& \multicolumn{5}{l}{ $ \text{{\bf InitialSpaceAdaptivity}}$} \\
	\multicolumn{6}{l}{\{ Initial condition interpolation and mesh refinement \}} \\
	\multicolumn{6}{l}{$(U_0,\mathcal{T}_{0})$:={\bf InitialSpaceAdaptivity}($U_{0}, f, \mathcal{T}_{0}, \xi_{refine}$).} \\
	\multicolumn{6}{l}{\{Initialize.\}} \\
	\multicolumn{6}{l}{{\bf Set:} $n=1$, $\timestep_n=\timestep_{n-1}$ and $t_n=t_{n-1} + \timestep_n$.} \\
	\multicolumn{6}{l}{{\bf While} $(t_n \le T)$ }\\
	& \multicolumn{5}{l}{ $(U_n,\mathcal{T}_{n})$ := {\bf SpaceAdaptivity}($U_{n-1}, \ldots$ }\\
	& & \multicolumn{4}{l}{ $\phantom{(U_n,\mathcal{T}_{n}):=}$  $ f, \text{TOL}_{\text{space}}, \text{TOL}_{\text{coarse}}, \ldots $}\\
	& & \multicolumn{4}{l}{ $\phantom{(U_n,\mathcal{T}_{n}):=}$  $ \timestep_n, t_n, T, \mathcal{T}_{n-1}, \xi_{refine}$) }\\
	 & \multicolumn{5}{l}{ compute $\mathbb{E}_{\text{time}}$ .}\\
	 & \multicolumn{5}{l}{{\bf if } $(\mathbb{E}_{\text{time}} > \text{TOL}_{\text{time}})$ {\bf then }}\\
	 & & \multicolumn{4}{l}{ $\timestep_{n+1} := \timestep_n / \sqrt{2} $ }\\
	& \multicolumn{5}{l}{{\bf elseif } $(\mathbb{E}_{\text{time}} < \text{TOL}_{\text{time,min}})$ {\bf then }}\\
	 & & \multicolumn{4}{l}{ $\timestep_{n+1} := \timestep_n * \sqrt{2}$ }\\
	& \multicolumn{5}{l}{{\bf endif}}\\
	& \multicolumn{5}{l}{$t_{n+1}:=t_n+\timestep_n$}\\
	& \multicolumn{5}{l}{$n:=n+1$}\\
	 \multicolumn{6}{l}{ {\bf End While} }\\
	\multicolumn{6}{l}{{\bf Output:} $U_n$} \\
	\end{tabular}
  \label{thetimealgorithm1}
}
\\
\end{tabular}
\noindent where {\bf SpaceAdaptivity} (and {\bf InitialSpaceAdaptivity}) are performed using a standard {\em D\"orfler marking} strategy expressed in pseudocode as follows

\vspace{4pt}
{\footnotesize
\begin{tabular}{llllll} 
\multicolumn{6}{l}{{\bf SpaceAdaptivity}}\\
\multicolumn{6}{l}{{\bf Input:} $U_{n-1}, f, \text{TOL}_{\text{space}}, \text{TOL}_{\text{coarse}}, \tau_n, t_n, T, \mathcal{T}_{n-1}, \xi_{refine}$} \\
\multicolumn{6}{l}{{\bf Set:} $\mathcal{T}_{n} := \mathcal{T}_{n-1}$.} \\
\multicolumn{6}{l}{$\mathcal{T}_{n} := $ \bf{SpaceCoarsening}($U_{n-1},\text{TOL}_{\text{coarse}}, \tau_n, \mathcal{T}_{n}$)} \\
\multicolumn{6}{l}{\{Refinement\}} \\
\multicolumn{6}{l}{compute local elliptic estimators, $(\text{LocalEst}_{n,\kappa})_{\kappa \in \mathcal{T}_{n}}$.}\\
\multicolumn{6}{l}{sum up local estimators and set $\text{Sum}_{\text{total}} := \sum_{\kappa \in \mathcal{T}_{n}} \text{LocalEst}_{n,\kappa}$, and compute $\mathbb{E}_{\text{space}}$.}\\
\multicolumn{6}{l}{{\bf While} $(\mathbb{E}_{\text{space}} > \text{TOL}_{\text{space}})$ }\\
 & \multicolumn{5}{l}{sort $(\text{LocalEst}_{n,\kappa})_{\kappa \in \mathcal{T}_{n}}$ in descending order, set $Q := \emptyset$.}\\
 & \multicolumn{5}{l}{{\bf Set:} $\text{Sum} = 0$.} \\
 & \multicolumn{5}{l}{{\bf While} $(( \text{Sum} < \xi_{refine} * \text{Sum}_{\text{total}} )$ and $(\kappa \in \mathcal{T}_{n}))$  }\\
 & \multicolumn{5}{l}{\{D\"orfler marking \} }\\
 & & \multicolumn{4}{l}{ $\text{Sum} := \text{Sum} + \text{LocalEst}_{n,\kappa}$}\\
 & & \multicolumn{4}{l}{{\bf if} $(Sum < \xi_{refine} * \text{Sum}_{\text{total}})$ }\\
 & & & \multicolumn{3}{l}{Mark $\kappa$ for refinement; $Q := \{ \kappa \} \cup Q$. }\\
 & \multicolumn{5}{l}{{\bf End While} }\\
 & \multicolumn{5}{l}{Refine all elements in $Q$ to obtain new mesh $\mathcal{T}_{n}$. }\\
 & \multicolumn{5}{l}{Solve $I^n U_{n-1}$. }\\
 & \multicolumn{5}{l}{Solve (\ref{eqn:fully-discrete-Euler-pure}) for $U_n$ with $\Pi^n U_{n-1}, \Pi^n f^n, \tau_n$ and $t_n$ on $\mathcal{T}_n$. }\\
 & \multicolumn{5}{l}{compute local elliptic estimators, $(\text{LocalEst}_{n,\kappa})_{\kappa \in \mathcal{T}_{n}}$.}\\
 & \multicolumn{5}{l}{sum up local estimators and set $\text{Sum}_{\text{total}} := \sum_{\kappa \in \mathcal{T}_{n}} \text{LocalEst}_{n,\kappa}$, and compute $\mathbb{E}_{\text{space}}$.}\\
 \multicolumn{6}{l}{ {\bf End While} }\\
\multicolumn{6}{l}{{\bf Output:} $U_n, \mathcal{T}_n$} 
\end{tabular}
}

\noindent The refinement ratio $0 < \xi_{refine} \le 1$ and the tolerances $\text{TOL}_{\text{space}}>0,\text{TOL}_{\text{space}}>0$ and $ \text{TOL}_{\text{coarse}}>0$ are predefined quantities. The value of $\xi_{refine}:=0.75$ was used throughout the experiments in adaptive algorithms. Note that the coarsening tolerance, $\text{TOL}_{\text{coarse}}$, (as well as the tolerance for the alternative space estimator in $L^\infty$ case of Remark \ref{remark:ellertimederivativebound}) had to be determined experimentally for given space and time tolerances and depending on an example. 

The results of experiments with adaptive algorithms as well as a comparison between the two algorithms, are detailed in Figures 6-8 where we monitor time step size, accumulated degrees of freedom and error evolution in comparison to the uniform approach leading to the desired tolerance.  The results of these test cases imply substantial reduction in degrees of freedom by both adaptive algorithms in order to reach the same error tolerance as compared with the uniform approach. This implies a potential efficiency gain in solving PDE problems addressed in this work.

The estimators presented here are found to be suitable for both adaptive time stepping algorithms due to their good separated scaling properties in time and in space. The numerical results appear to be less sensitive to mesh change, compared to the same adaptive algorithms based on the $L^2(H^2)$-norm a posteriori error estimators presented in \cite{thesis_juha}. 
For instance, terms involving $({\ellrecsource}^{n} - {\ellrecsource}^{n-1})$ which is sensitive to mesh change (coarsening as well as refinement) scale down sufficiently fast with the a posteriori estimators presented in this work, resulting to robust error reduction in an adaptive algorithm. 

Finally, we note that the considerably more computationally efficient {\bf ExplicitTimeStepControl} algorithm (due to absence of time step searching step) was found to reach desired error tolerances (even though this is not guaranteed in general) in these numerical experiments.

\begin{figure}[ht]
		\includegraphics[clip,width=\textwidth]{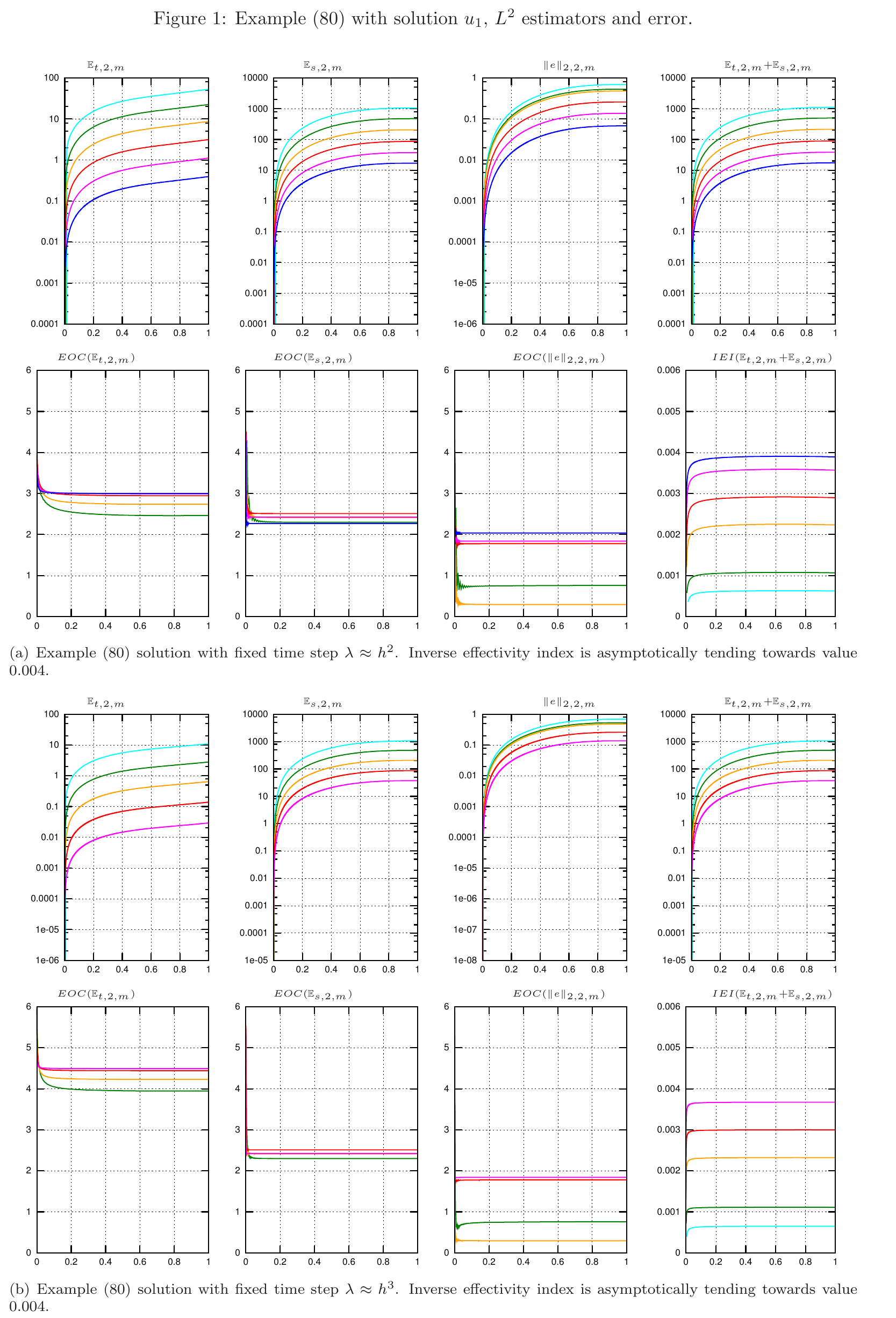}
\end{figure}

\begin{figure}[ht]
		\includegraphics[clip,width=\textwidth]{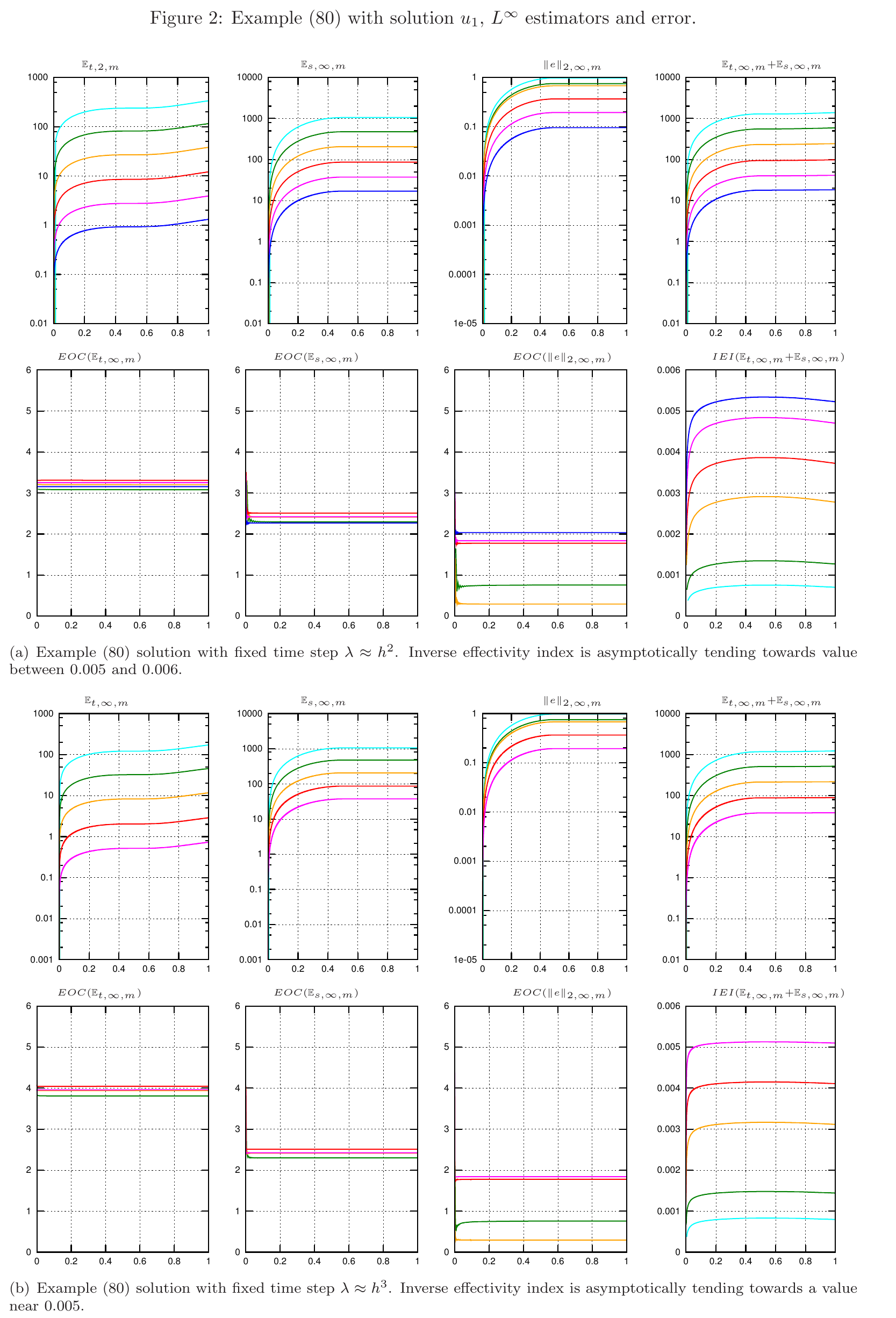}
\end{figure}

\begin{figure}[ht]
		\includegraphics[clip,width=\textwidth]{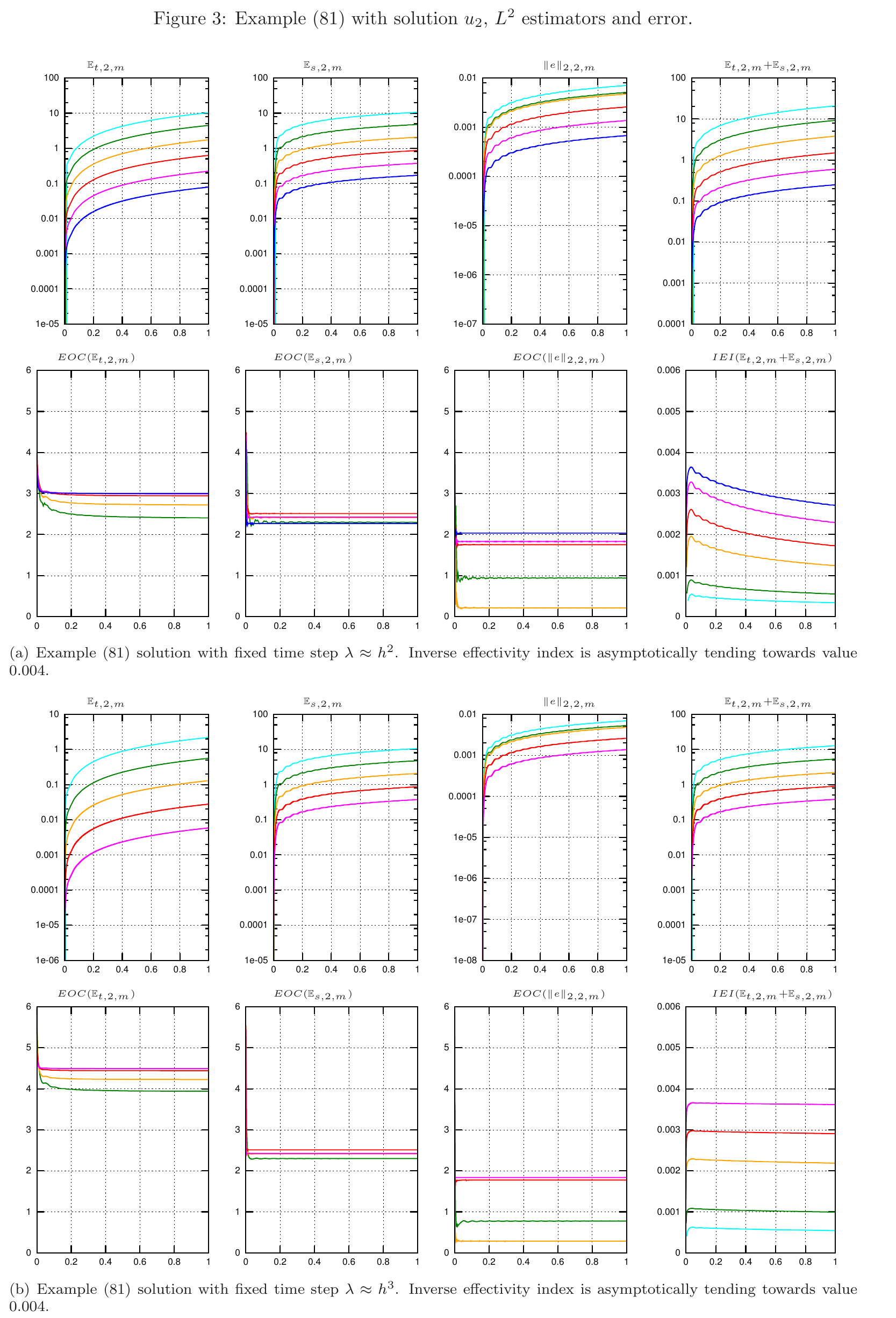}
\end{figure}

\begin{figure}[ht]
		\includegraphics[clip,width=\textwidth]{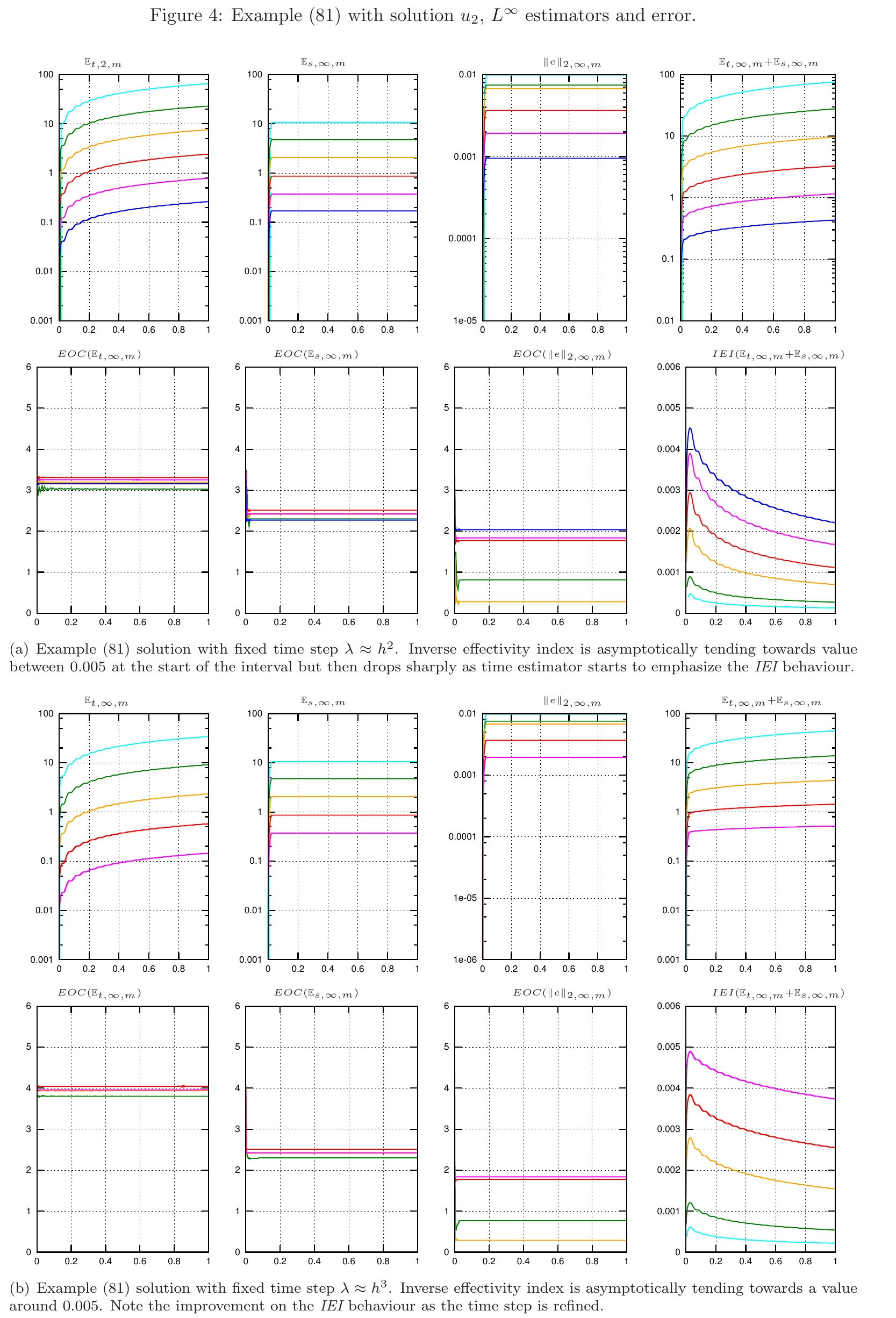}
\end{figure}

\begin{figure}[ht]
		\includegraphics[clip,width=\textwidth]{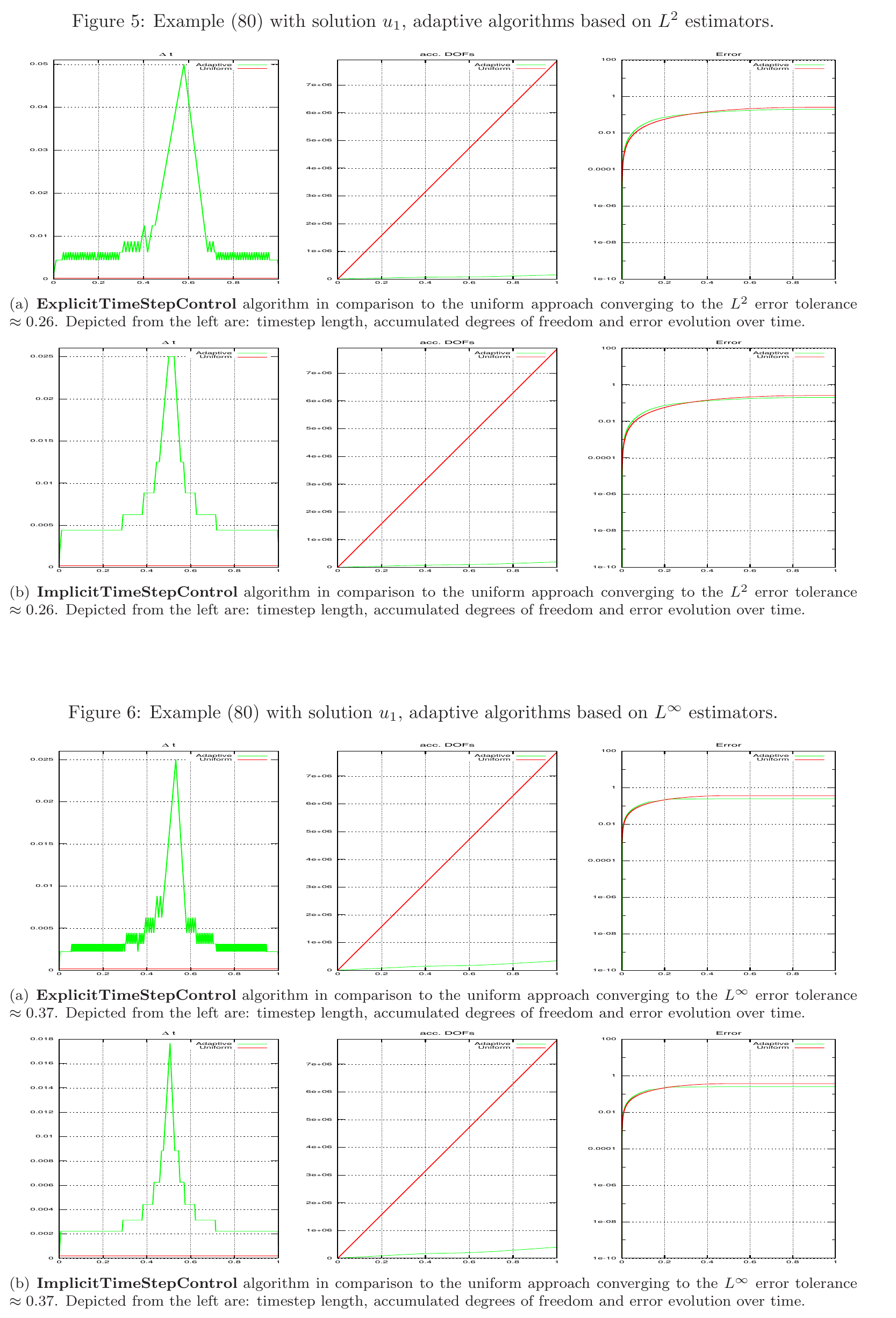}
\end{figure}

\begin{figure}[ht]
		\includegraphics[clip,width=\textwidth]{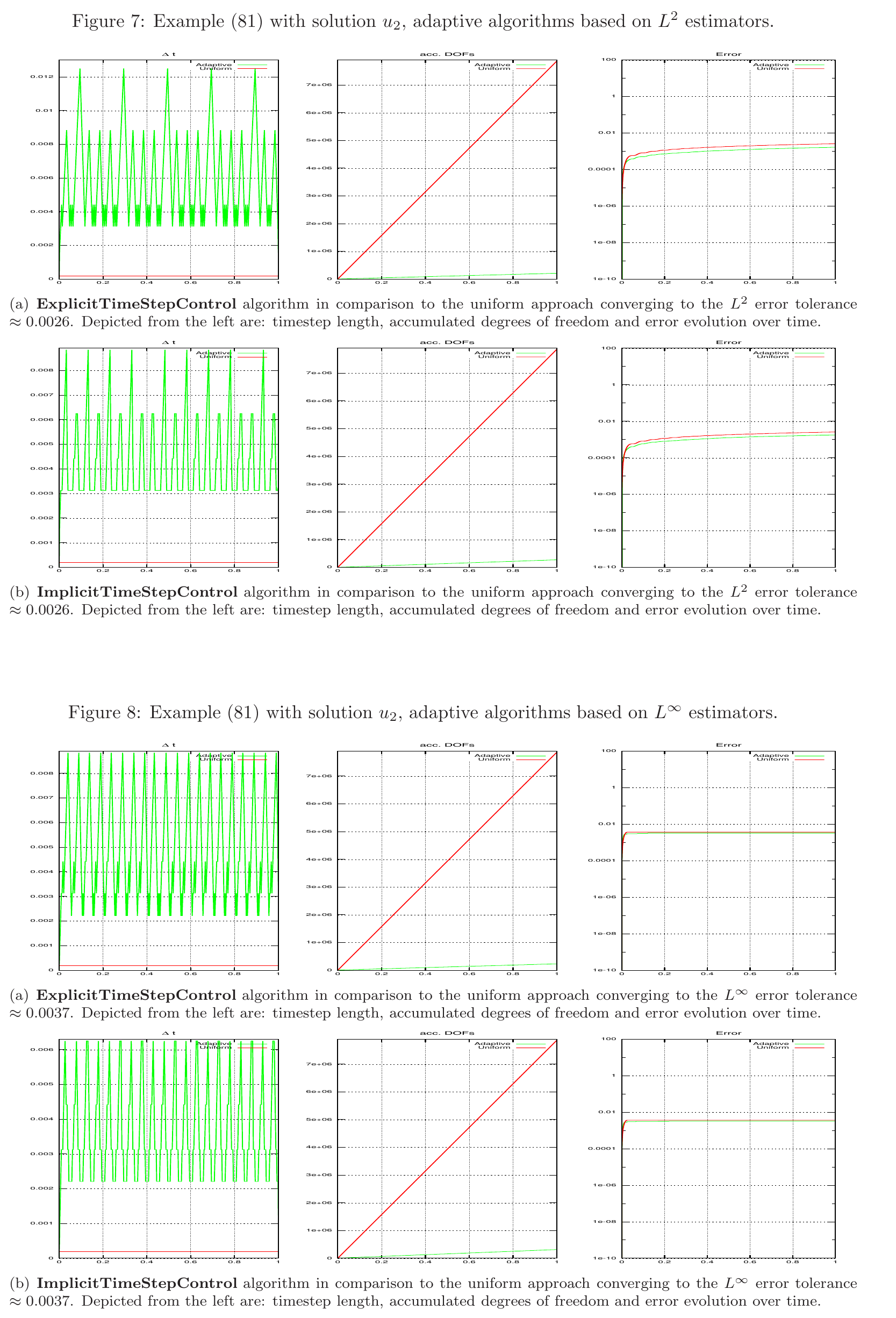}
\end{figure}

\section{Concluding remarks}\label{finalremark}
Residual type a posteriori estimates of errors measured in $L^\infty(L^2)$- and $L^2(L^2)$-norms for a numerical scheme consisting of implicit Euler method in time and discontinuous Galerkin method of local polynomial degrees $r \ge 2$ in space for linear parabolic fourth order problems in space dimensions $2$ and $3$ are presented. Numerical experiments confirming the practical efficiency and reliability of the a posteriori estimators are also presented, along with the use of these a posteriori estimator within adaptive algorithms. It appears that the derived a posteriori bounds and the respective adaptive algorithms can be modified in a straightforward fashion to the original dG method of Baker \cite{baker} and to the $C^0$-interior penalty methods of \cite{hughes, brenner}. Moreover, second order operators can be included in the present analysis, as was done in \cite{thesis_juha}. An extension of these results to nonlinear fourth order parabolic equations remains a future challenge.

\def\cprime{$'$}

\end{document}